  \newcolumntype{d}{D{.}{.}{-1}}
\newtheorem{theorem}{Theorem}
\newtheorem{proposition}{Proposition}
\newtheorem{remark}{Remark}
\newenvironment{proof}{{\it Proof. }}{\hfill $\Box$}
\newcommand{\Real}{\mathbb R}
\newcommand{\set}[1]{\left\{#1\right\}}
\newcommand{\real}[1]{{\mathbb R}^{#1}}
\newcommand{\bb}{{\boldsymbol b}}
\newcommand{\bC}{{\boldsymbol C}}
\newcommand{\be}{{\boldsymbol e}}
\newcommand{\bff}{{\boldsymbol f}}
\newcommand{\bh}{{\boldsymbol h}}
\newcommand{\bl}{{\boldsymbol l}}
\newcommand{\bu}{{\boldsymbol u}}
\newcommand{\bx}{\boldsymbol x}
\newcommand{\bxf}{{\bx(\cdot)}}  
\newcommand{\buf}{{\bu(\cdot)}}  
\newcommand{\bA}{{\boldsymbol A}}
\newcommand{\bB}{{\boldsymbol B}}
\newcommand{\bD}{{\boldsymbol D}}
\newcommand{\bI}{{\boldsymbol I}}
\newcommand{\bM}{{\boldsymbol M}}
\newcommand{\X}{\mathbb{X}}
\newcommand{\U}{\mathbb{U}}
\newcommand{\bzero}{{\bf 0}}
\author{ %
N. Koeppen\thanks{Graduate Student and Captain, USMC, Control and Optimization Laboratories, Department of Mechanical and Aerospace Engineering },
I. M. Ross\thanks{Distinguished Professor and Program Director, Control and Optimization, Department of Mechanical and Aerospace Engineering},
L. C. Wilcox\thanks{Associate Professor, Department of Applied Mathematics},
R. J. Proulx\thanks{Research Professor, Control and Optimization Laboratories, Space Systems Academic Group}\\
\textit{Naval Postgraduate School, Monterey, CA 93943}
}
\title{Fast Mesh Refinement in Pseudospectral Optimal Control}
\abstract{
Mesh refinement in pseudospectral (PS) optimal control is embarrassingly easy --- simply increase the order $N$ of the Lagrange interpolating polynomial and the mathematics of convergence automates the distribution of the grid points. Unfortunately, as $N$ increases, the condition number of the resulting linear algebra increases as $N^2$; hence, spectral efficiency and accuracy are lost in practice.
In this paper, we advance Birkhoff interpolation concepts over an arbitrary grid to generate well-conditioned PS optimal control discretizations. We show that the condition number increases only as $\sqrt{N}$ in general, but is independent of $N$ for the special case of one of the boundary points being fixed. Hence, spectral accuracy and efficiency are maintained as $N$ increases.  The effectiveness of the resulting fast mesh refinement strategy is demonstrated by using \underline{polynomials of over a thousandth order} to solve a low-thrust, long-duration orbit transfer problem.

}
\begin{document}
\maketitle


\section{Introduction}
In principle, mesh refinement in pseudospectral (PS) optimal control\cite{PSReview-ARC-2012} is embarrassingly easy --- simply increase the order $N$ of the Lagrange interpolating polynomial.
For instance, in a Chebyshev PS method\cite{boyd,fahroo:cheb-jgcd,cheb-costate}, the Chebyshev-Gauss-Lobatto (CGL) mesh points are given by\cite{boyd},
\begin{equation}\label{eq:CGL-t-domain}
t_i = \frac{1}{2} \left[(t_f + t_0) - (t_f -t_0)\cos\left(\frac{i\pi}{N}\right) \right], \quad i = 0, 1, \ldots, N
\end{equation}
where, $[t_0, t_f]$ is the time interval.  Grid points generated by \eqref{eq:CGL-t-domain} for $N = 5, 10$ and $20$ over a canonical time-interval of $[0, 1]$ are shown in Fig.~\ref{fig:CGLgrid}.
%
   \begin{figure}[h!]
      \centering
      {\includegraphics[angle=0, width = 0.6\textwidth]{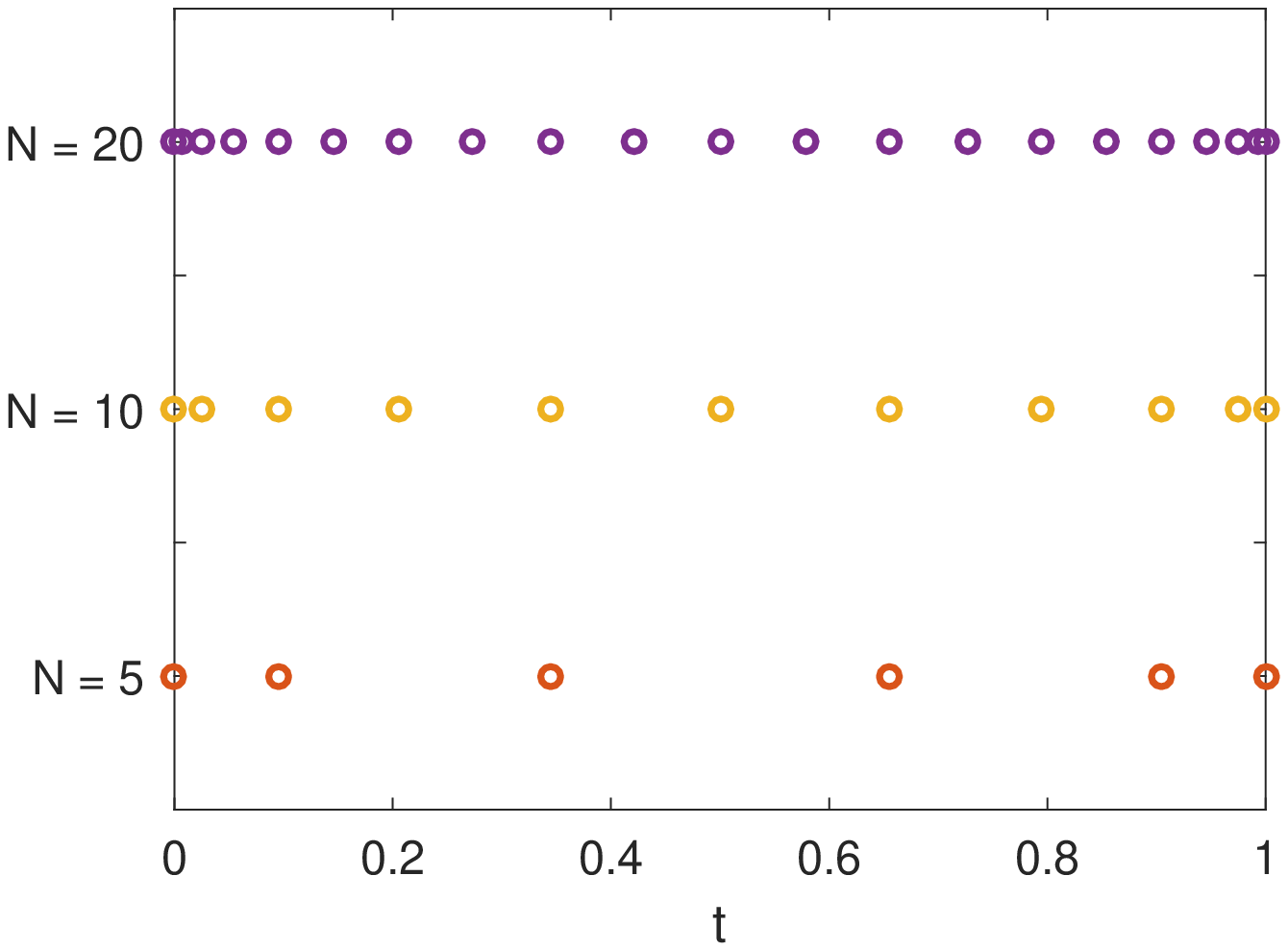}}
      \caption{\textsf{Grid points for $N = 5, 10$ and $20$ for a Chebyshev pseudospectral method.}}
    \label{fig:CGLgrid}
   \end{figure}
%
The production of these mesh points is as fast as it takes to compute the cosine function. Furthermore, the mesh points are at optimal locations for a Chebyshev PS method because their placement is dictated by well-established rules of approximation theory\cite{atap}.
The price for this speed and simplicity is an increase in the condition number of the underlying linear algebra that governs the iterations\cite{PSReview-ARC-2012, boyd, shen-book}. In fact, as shown in Fig.~\ref{fig:CondNo},
%
   \begin{figure}[h!]
      \centering
      {\includegraphics[angle=0, width = 0.6\textwidth]{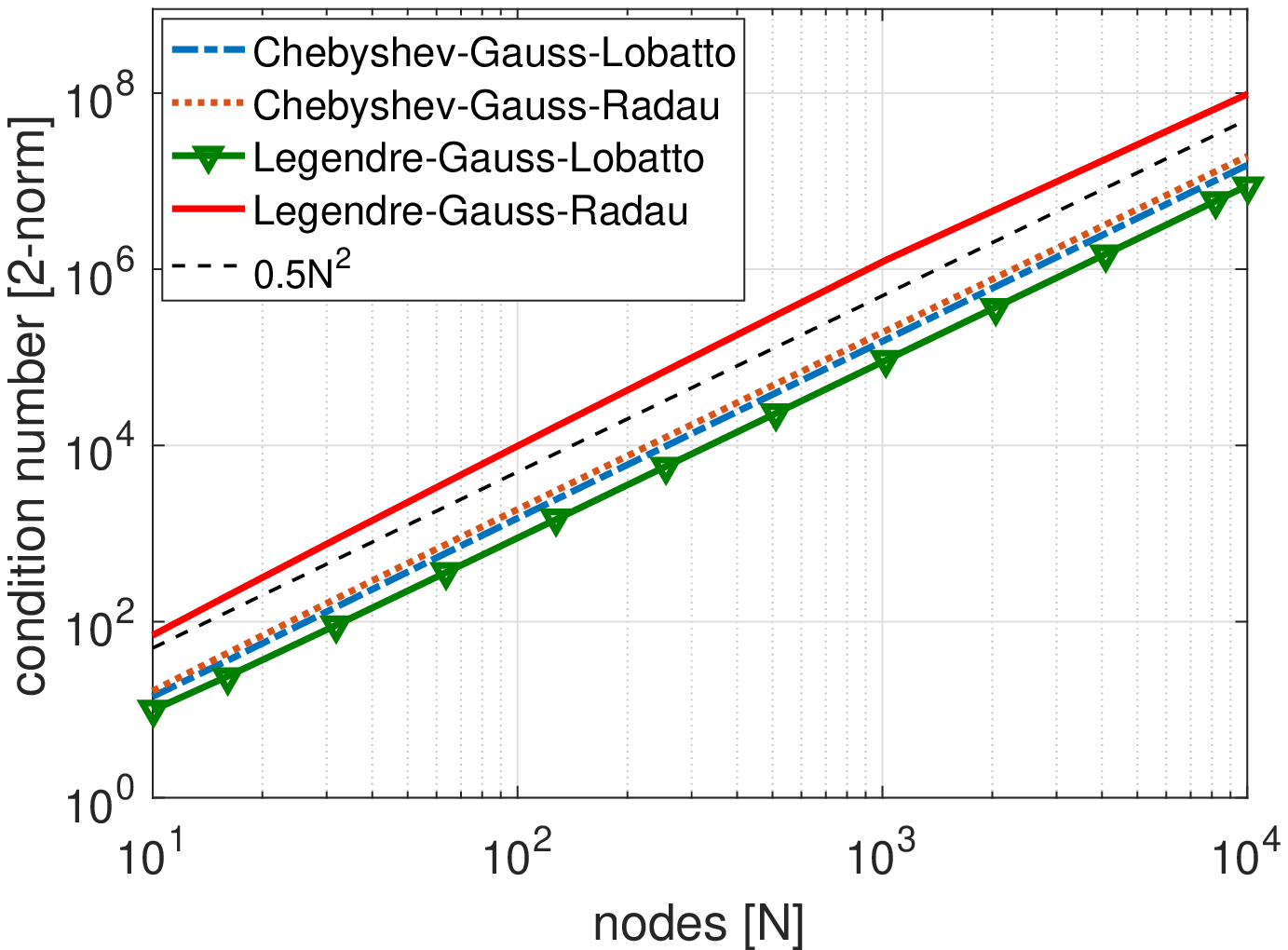}}
      \caption{\textsf{Illustrating $\mathcal{O}(N^2)$-growth in the condition number associated with \eqref{eq:xdot=f} for various mesh selections.}}
    \label{fig:CondNo}
   \end{figure}
%
the condition number associated with the PS discretization of the dynamical equations,
\begin{equation}\label{eq:xdot=f}
\dot\bx(t) = \bff(\bx(t), \bu(t), t)
\end{equation}
increases as $N^2$.  The plots in this figure are the condition numbers of the non-singular, inner, square-portion of the various differentiation matrices\cite{PSReview-ARC-2012,boyd} associated with the indicated mesh selections. \emph{\textbf{Note that the Legendre-Gauss-Radau mesh has the worst performance of all the indicated PS grid selections.}}  An intuitive approach to manage the growth in the condition number is to break up the time interval into several segments and apply a PS method over each segment\cite{fahrooRoss:patching,knots,acc:hybrid,auto-knots}.  When the segments are non-overlapping and information across the segments is passed across a single point, it results in a PS knotting method\cite{knots,auto-knots}; see Fig.~\ref{fig:PSKnots}.
%
   \begin{figure}[h!]
      \centering
      {\includegraphics[angle=0, height=2.5in, width = 0.5\textwidth]{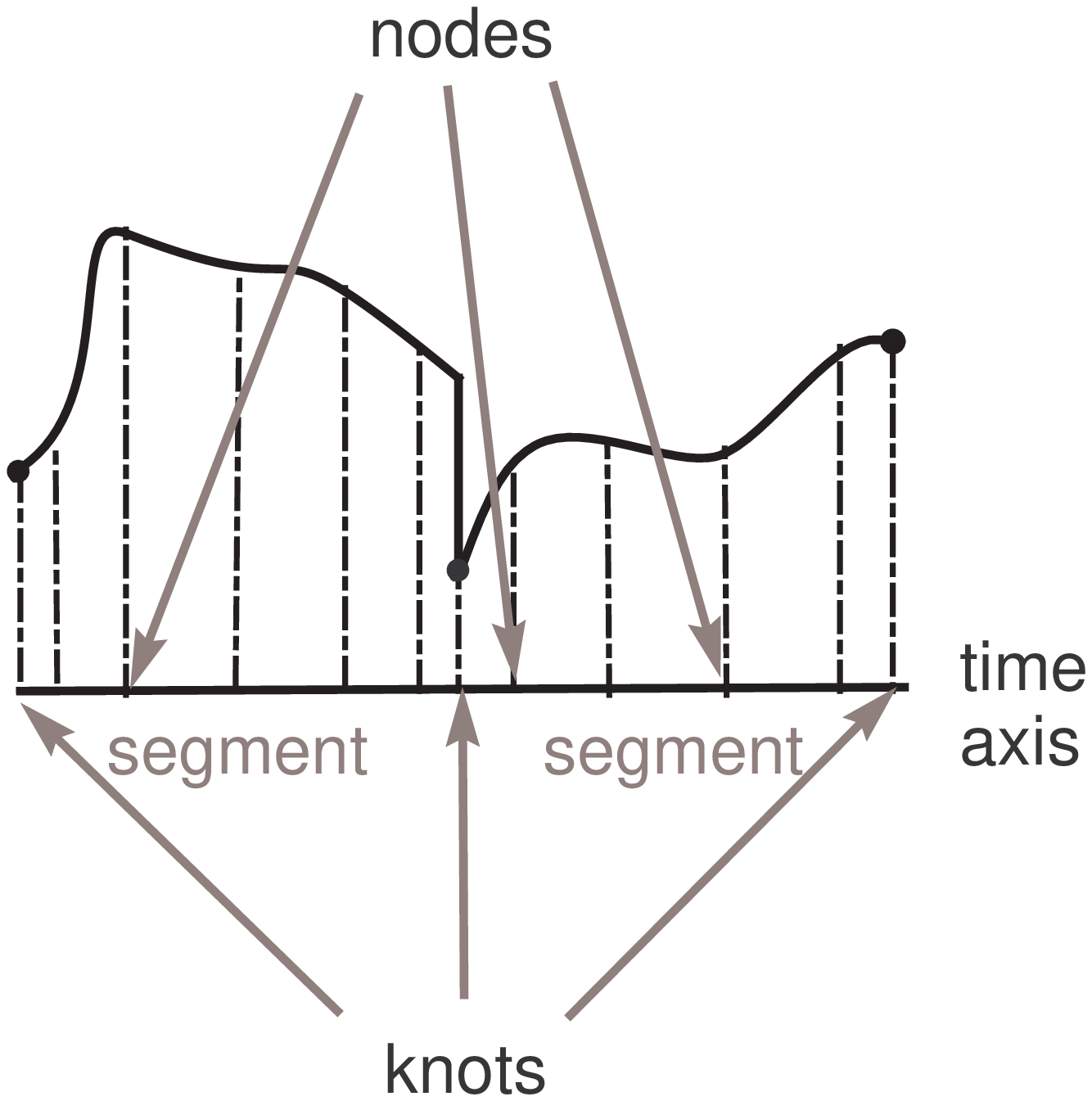}}
      \caption{\textsf{An illustration of PS knots in a generic pseudospectral knotting method; adapted from \cite{knots}.}}
    \label{fig:PSKnots}
   \end{figure}
%
The special nodes where information across segments is transmitted are called PS knots.  In the spectral algorithm for PS methods\cite{auto-knots,spec-alg}, the number and location of the PS knots are determined adaptively. The adaptation process is based on an analysis of the solution generated by the prior iterate.  Among other things, the analysis involves an estimation of the error and a determination of the frequency content of the system trajectory.  Based on this analysis, either $N$ per segment is increased or new knots are placed near high frequency regimes\cite{auto-knots,spec-alg}. The entire process is repeated for the next iterate or until all stopping criteria are met\cite{PSReview-ARC-2012,knots,auto-knots,spec-alg}.

Although the PS knotting method manages the growth in the condition number by simply limiting the order of the approximating polynomial in a given segment, the price for this approach is quite severe.  First, there is an increase in the computation time to determine the number and location of the knots; second, there is a significant reduction in the rate of convergence\cite{TAC:linearizable,kang-ijrc,Kang_2008_convergence,kang-rate} that adds to the increase in computational time and effort.  The convergence rate varies inversely to the number of segments --- the more the segments, the poorer the convergence rate. If the order of the approximating polynomial is limited to low orders (e.g., $N=4$), a PS  method becomes theoretically equivalent to a Runge-Kutta method\cite{conway-GL,hnw-ode,conway:survey}.  In fact, the ``exponential'' rate of convergence of an unknotted PS method is chiefly due to the high orders (i.e., $N\gg10$) of the approximating polynomials\cite{boyd,atap}.  In many aerospace applications\cite{Karp-JWST,Kepler-micro-slew,RobStevens-Tether, CDC-Workshop},  including flight implementations\cite{PSReview-ARC-2012,zpm:IEEE,bhatt:opm,TRACE-IEEE-Spectrum,TRACE-JGCD-2014,Minelli-AAS,zpm:NASA-report}, very accurate solutions\cite{TEI-JGCD-2011,scaling} are generated for $N \le 100$. In certain emerging areas of applications\cite{ross-challenge,nolcos16,napa16}, there have been new requirements to generate solutions for $N\gg 100$.   Because the unknotted PS method generates high condition numbers, various types of preconditioning are often used to alleviate the problem\cite{hesthavan,elbarbary}. In recent years, there have been major advancements in well-conditioned PS methods\cite{wang,olver,du} for ordinary differential equations.  \emph{\textbf{These methods offer a remarkable drop in the condition number from $ \mathcal{O} (N^2)$ to $ \mathcal{O}(1)$. }}In this paper, we adapt these advancements for PS optimal control using Birkhoff interpolants\cite{wang}.
The dramatic drop in the condition number allows us to generate solutions for thousands of grid points (i.e., using approximating polynomials for state trajectories in excess of a thousand degrees) thereby generating a very fast mesh refinement strategy.  Numerical results for a sample low thrust trajectory optimization problem are presented to illustrate the advancements.

\vskip 0.4in
\fbox{
\begin{minipage}{0.9\textwidth}
The citation for the journal version of this paper is: \textit{J. Guid., Contr. \& Dyn., 42/4, 2019, pp.~711--722}.
\end{minipage}
}

\section{A Brief Review of the State-of-the-Art in PS Optimal Control Methods}
In order to provide a proper context for the issues in PS optimal control methods, we first briefly review the state of the art.  To this end, consider a generic optimal control problem given by\cite{ross-book, longuski,clarke-2013book},
%
\newlength{\leftside}
\newlength{\rightside}
\newcommand*{\leftterm}{}
\newcommand*{\rightterm}{}
\newcommand*{\term}[1]{$\displaystyle#1$}
\begin{align*}
&\left.\begin{aligned}
\phantom{preamble\,\,p}
\X & = \real{N_x}   &\U & = \real{N_u}& \\
\bx &= (x_1, \ldots, x_{N_x})  \quad &\bu &= (u_1, \ldots, u_{N_u}) & \\
\end{aligned}\hspace{2.85cm} \right\} \ \text{(preamble)}
\\
%
&\begin{aligned}
\renewcommand*{\leftterm}{\phantom{\quad J[\bx(\cdot), \bu(\cdot), t_0, t_f]}}
\renewcommand*{\rightterm}{\phantom{\be(\bx_0, \bx_f, t_0, t_f) \le \be^U dt }}
\settowidth{\leftside}{\term{\leftterm}}
\settowidth{\rightside}{\term{\rightterm}}
\overbrace{(G)}^{\text{\normalsize problem}} \left\{
\begin{array}{ll}
\text{Minimize }& \\[-1.25em] 
& \left.\begin{aligned}
\makebox[\leftside][r]{\term{J[\bx(\cdot), \bu(\cdot), t_0, t_f] := }} & \\
 \makebox[\leftside][r]{\term{E(\bx_0, \bx_f, t_0, t_f)}} & + \makebox[\rightside][l]{\term{\int_{t_0}^{t_f}F(\bx(t), \bu(t), t)\ dt }}  \\
 \end{aligned}\right\} \ \text{(cost)}\\ [2em]
\text{Subject to} &  
\\[-1.5em]
&  \left.\begin{aligned}
    \makebox[\leftside][r]{\term{\dot\bx}} &= \makebox[\rightside][l]{\term{\bff(\bx(t), \bu(t), t)}}\\
  \end{aligned}\right\}\ \text{(dynamics)} \\ [1em]
  %
&  \left.\begin{aligned}
    \makebox[\leftside][r]{\term{\be^L}}&\le \makebox[\rightside][l]{\term{\be(\bx_0, \bx_f, t_0, t_f) \le \be^U}}\\
  \end{aligned}\right\} \ \text{(events)} \\[1em]
  &  \left.\begin{aligned}
   \makebox[\leftside][r]{\term{\bh^L}}
   &\le \makebox[\rightside][l]{\term{\bh(\bx(t), \bu(t), t) \le \bh^U}}\\
  \end{aligned}\right\} \ \text{(path)}
\end{array}
\right.
\end{aligned}\label{prob:B}
\end{align*}
%
where, the symbols denote the following:
\begin{itemize}
\item $\X$  and $\U$ are $N_x$- and $N_u$-dimensional real-valued state and control spaces respectively. We assume $N_x \in \mathbb{N}^+ $ and $N_u \in \mathbb{N}^+ $.
\item $J$ is the scalar cost function.  The arguments of $J$ are the optimization variables.
\item The optimization variables are:
\begin{itemize}
\item $\bxf$: the $N_x$-dimensional state trajectory,
\item $\buf$: the $N_u$-dimensional control trajectory,
\item $t_0$: the initial clock time, and
\item $t_f$: the final clock time.
\end{itemize}
\item $E$ is the scalar endpoint cost function.  The arguments of $E$ are the endpoints.  In the classical literature, $E$ is known as the ``Mayer'' cost function.
\item The endpoints are the initial state $\bx_0 \equiv \bx(t_0)$, the final state $\bx_f \equiv \bx(t_f)$, the initial time $t_0$ and the final time $t_f$.
\item $F$ is the scalar running cost function.  The arguments of $F$ are the instantaneous value of the state variable $\bx(t)$, the instantaneous value of the control variable $\bu(t)$ and time $t$.
\item $\bff$ is the $N_x$-dimensional ``dynamics function,'' or more appropriately the right-hand-side of the dynamics equation. The arguments of $\bff$ are exactly the same as the arguments of $F$.
\item $\be$ is the $N_e$-dimensional endpoint constraint function.  The arguments of $\be$ are exactly the same as that of $E$.
\item $\be^L$ and $\be^U$ are the $N_e$-dimensional lower and upper bounds on the values of $\be$.
\item $\bh$ is the $N_h$-dimensional path constraint function. The arguments of $\bh$ are exactly the same as that of $F$.
\item $\bh^L$ and $\bh^U$ are the $N_h$-dimensional lower and upper bounds on the values of $\bh$.
\end{itemize}
%
%
As shown in \cite{advances}, many key aspects of PS optimal control can be understood by considering a distilled optimal control problem given by,
\begin{eqnarray}
&x \in \Real, \quad u \in \Real, \quad \tau \in [-1, 1]   & \nonumber\\
& (\textsf{$P$}) \left\{
\begin{array}{lrl}
\emph{Minimize } & J[x(\cdot), u(\cdot)] =& E(x(-1),x(1))\\
\emph{Subject to}& \dot x(\tau) =& f(x(\tau), u(\tau))  \\
& e(x(-1), x(1))  = & 0
\end{array} \right. & \label{eq:probP}
\end{eqnarray}
The PS results generated for Problem $P$ easily generalize to Problem $G$, albeit with some tedious bookkeeping.  In order to avoid the distractions of bookkeeping, we limit many of our subsequent discussions to Problem $P$ while noting that all of the results -- with appropriate reindexing -- apply to Problem $G$ as well.  See also \cite{PSReview-ARC-2012} for a more thorough review and survey of the key results.

In PS optimal control theory, the state trajectory $x(\cdot)$ is expressed -- in principle -- as a generalized Fourier expansion,
\begin{equation}\label{eq:modal}
x(\tau) = x^\infty(\tau) := \lim_{N \to \infty} x^N(\tau) :=\lim_{N \to \infty} \sum_{m=0}^N a_m P_m(\tau)
\end{equation}
where, $a_m \in \Real$ is an unknown ``spectral'' coefficient of an $m^{th}$-order polynomial $P_m$.
Note that \eqref{eq:modal} is an exact representation and not an approximation.  The justification for \eqref{eq:modal} comes from the classic Stone-Weierstrass theorem\cite{Tao:epsilon} which guarantees a polynomial representation for $x(\cdot)$ under mild assumptions.  For most aerospace applications, the ``best'' choices for $P_m$ are  the Legendre and Chebyshev polynomials; hence, PS optimal control techniques resulting from these ``big two'' polynomials are known as the Legendre and Chebyshev PS methods respectively. A key principle in a generic PS method is that the spectral coefficients $a_m$ are computed indirectly by transforming \eqref{eq:modal} to an equivalent ``pseudospectral'' form\cite{ross-book, arb-grid-aas, arb-grid},
\begin{equation}\label{eq:nodal}
x^N(\tau) := \sum_{j=0}^N x^N(\tau_j)L_j(\tau)
\end{equation}
where, $\tau_j, j = 0, 1, 2, \ldots, N $ are discrete points in time, known as nodes, and $L_j$ are Lagrange interpolating polynomials.  That is, $L_j(\tau), j = 0, 1, 2, \ldots, N $ are such that they  satisfy the Kronecker relationship,
\begin{equation}\label{eq:lag=kron}
 L_j(\tau_i) = \delta_{ij}
\end{equation}
\emph{\textbf{Equations \eqref{eq:modal} and \eqref{eq:nodal}  are known as the modal and nodal representations of $x(\cdot)$ respectively\cite{ross-book}}}.

Let,
\begin{equation}
\pi^N := [\tau_0, \tau_1, \ldots, \tau_N]
\end{equation}
be an arbitrary grid (or ``mesh'') of points (see Fig.~\ref{fig:ArbGrid}) such that
\begin{equation}\label{eq:arb-grid}
-1 \le \tau_0 < \tau_1 < \cdots < \tau_{N-1} < \tau_N \le 1
\end{equation}
Note that \eqref{eq:arb-grid} allows the extreme points of the grid, $\tau_0$ and $\tau_N$, to be at or near the endpoints.  A nodal representation of the state trajectory over $\pi^N$ is given by,
\begin{equation}\label{eq:Lag-x}
 x^N(\tau) := \sum_{j=0}^N x_j L_j(\tau)
\end{equation}
%
%
   \begin{figure}[h!]
      \centering
      {\includegraphics[angle=0, width = 0.5\textwidth]{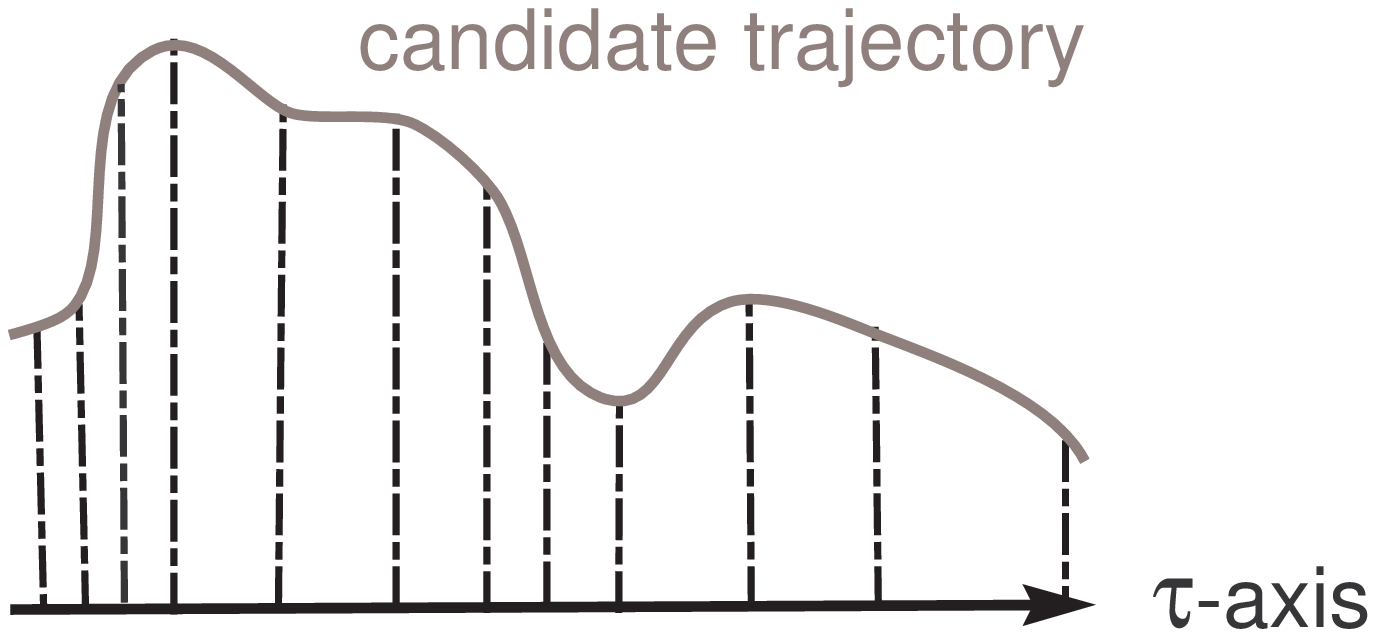}}
      \caption{\textsf{Illustration of an arbitrary grid in the generic pseudospectral method developed in \cite{arb-grid-aas} and \cite{arb-grid}.}}
    \label{fig:ArbGrid}
   \end{figure}
%
Applying \eqref{eq:lag=kron} to \eqref{eq:Lag-x}, it follows that $x_i = x^N(\tau_i), \ i = 0, \ldots, N$.
Differentiating \eqref{eq:Lag-x} and evaluating $x^N(t)$ over the grid we get,
\begin{equation}\label{eq:xNdot}
\dot x^N(\tau_i) = \sum_{j=0}^N x_j \dot L_j(\tau_i)
\end{equation}
Define
\begin{equation}\label{eq:D:=}
\bD := \Big(\dot L_j(\tau_i)  \Big)_{0\le i,j \le N}
\end{equation}
as the PS differentiation matrix, and  let,
\begin{equation}\label{eq:X:=}
X := [x_0, x_1, \ldots, x_N]^T
\end{equation}
Then, using \eqref{eq:D:=} and \eqref{eq:X:=},  \eqref{eq:xNdot} can be written as a matrix-vector product given by,
\begin{equation}\label{eq:XDOT=DX}
\dot X = \bD X
\end{equation}
\emph{\textbf{Note that in sharp contrast to Runge-Kutta methods, PS methods rely almost entirely on an accurate discretization of $\dot x$ rather than $f(x, u)$, the right-hand-side of the differential equation,~\eqref{eq:xdot=f}}}.

A nodal representation of the control trajectory over $\pi^N$ is given by,
\begin{equation}\label{eq:uN}
u^N(\tau) = \sum_{j=0}^N u_j\, C_j(\tau)
\end{equation}
where, $C_j(\tau)$ is some interpolating function. 
%
\begin{remark}
Note that the control trajectory $\tau \mapsto u^N(\tau)$ is not necessarily a polynomial.\footnote{Unfortunately, it is widely and erroneously reported in the optimal control literature that $C_j$ is a polynomial.}  That is, $C_j(\tau)$ is any function that satisfies,
$$ C_j(\tau_i) = \delta_{ij}  $$
The non-polynomial form of $t \mapsto u^N(t)$ is exploited in \cite{TAC:linearizable,kang-ijrc,Kang_2008_convergence,kang-rate} for a strong proof of convergence; that is, a convergence proof that does not rely on unnecessary assumptions\cite{hager} that are typically not satisfied in many practical optimal control problems.  Note as well that the Bellman pseudospectral method\cite{bellman-low-t,bellman-conf} also capitalizes on the non-polynomial form of $t \mapsto u^N(t)$.
\end{remark}
%
Substituting \eqref{eq:Lag-x} and \eqref{eq:uN} in $f(x,u)$,  and evaluating $f$ over $\pi^N$ we get,
$$f(x^N(\tau), u^N(\tau))\big|_{\pi^N} = [f(x_0, u_0), f(x_1, u_1), \ldots, f(x_N, u_N)]  $$
It is convenient to reuse $f$ as an overloaded function (see \cite{ross-book}, page 8) defined by,
$$ f(X,U) := [f(x_0, u_0), f(x_1, u_1), \ldots, f(x_N, u_N)]^T$$
where,
\begin{equation}
U := [u_0, u_1, \ldots, u_N]^T
\end{equation}
Setting $\dot X = f(X,U)$ and using \eqref{eq:XDOT=DX}, we define Problem $P^N$ according to,
\begin{eqnarray}
&X \in \real{N+1}, \quad U \in \real{N+1}   & \nonumber\\
& \left(\textsf{$P^N$}\right) \left\{
\begin{array}{lrl}
\emph{Minimize } & J^N[X, U] :=& E(x_0,x_N)\\
\emph{Subject to}& \bD X =& f(X, U)  \\
& e(x_0, x_N)  = & 0
\end{array} \right. & \label{eq:ProbPN}
\end{eqnarray}
Comparing \eqref{eq:probP} and \eqref{eq:ProbPN}, it is clear that Problem $P^N$ is strikingly similar to Problem $P$. In fact,  one may view the production of Problem $P^N$ as simple as the process of replacing the continuous operator $d/dt$ in Problem $P$ by the matrix $\bD$.
\begin{remark}
In the derivation of Problem $P^N$, no assumption was made on the structure of the grid~$\pi^N$; hence, \eqref{eq:ProbPN} represents a discretization of Problem $P$ for all PS methods\cite{arb-grid}.
\end{remark}
\begin{remark}\label{remark:cluster}
Once $\pi^N$ is chosen for a given Problem $P$, the theoretical performance of a PS method depends only on the selection of the grid.  The worst performance (i.e., rapid divergence) is characterized by a uniform distribution of nodes while the best performance (i.e., rapid convergence) requires node clustering at the end points\cite{arb-grid}.
\end{remark}
Remark \ref{remark:cluster} is illustrated in Fig.~\ref{fig:arbGridErrors}.
%
   \begin{figure}[h!]
      \centering
      \begin{subfigure}[ ]
     {\includegraphics[angle=0, width = 0.8\textwidth]{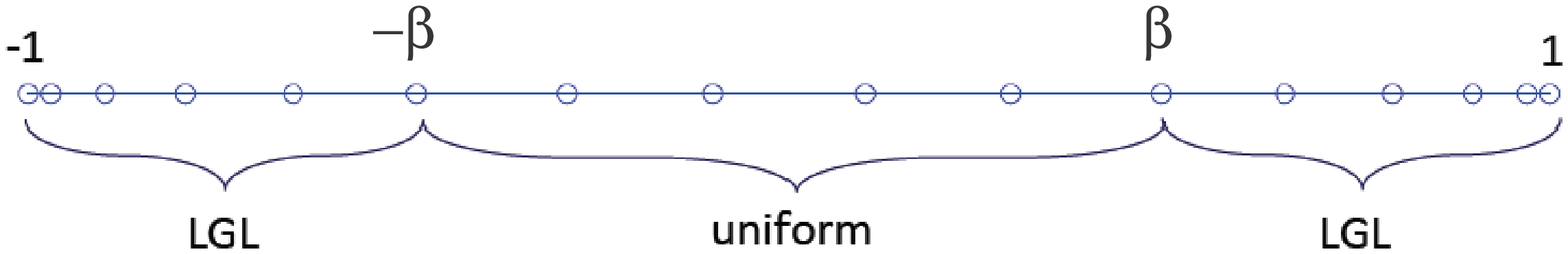}}
     \end{subfigure}
     \begin{subfigure}[]
     {\includegraphics[angle=0, width = 0.6\textwidth]{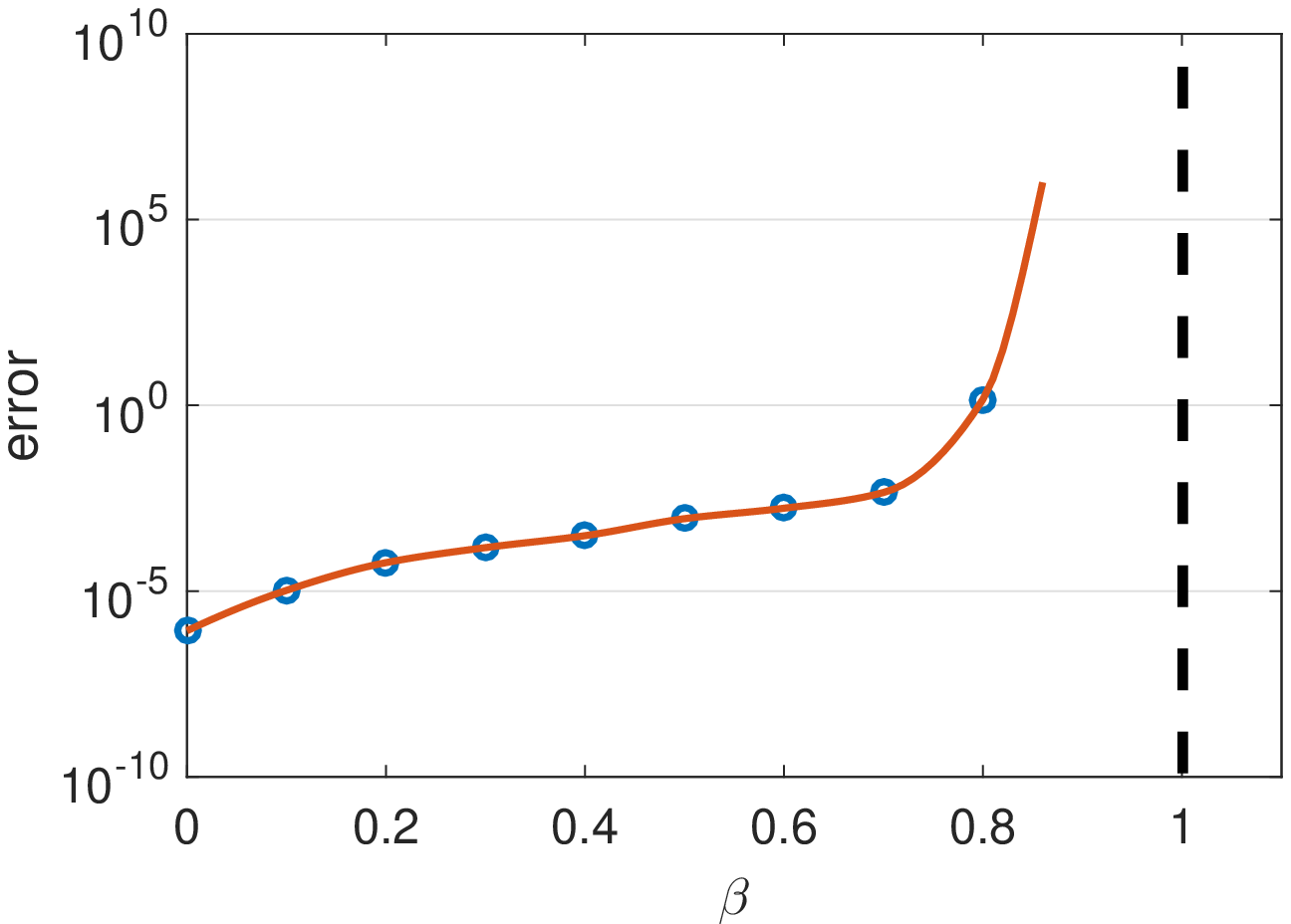}}
     \end{subfigure}
    \caption{\textsf{Effect of node clustering on convergence of PS optimal control\cite{arb-grid}.}}
    \label{fig:arbGridErrors}
   \end{figure}
%
This figure was generated using the data and results from \cite{arb-grid}. The parameter $\beta$ (see Fig.~\ref{fig:arbGridErrors}~a)) denotes a measure of clustering: from
optimal clustering (i.e., when $\beta = 0$) to a uniform node distribution (i.e., when $\beta = 1$).  It is apparent from Fig.~\ref{fig:arbGridErrors} b) that
optimal clustering is not required for convergence but is indeed critical for the rate of convergence.  Hence, it is desirable to maintain node clustering for a fast mesh refinement.

\begin{remark}
If $\pi^N$ is a convergent grid, many of the properties of the continuous-time operator $d/dt$ are faithfully represented by $\bD$.  For instance, let $z$ be a function that is not identically equal to zero; then,
\begin{subequations}
\begin{align}
\text{if } \frac{d}{dt} (z) &= 0  \Rightarrow  \bD Z  = \bzero \label{eq:D-singular}\\
\text{if } \frac{d^2}{dt^2} (z) &= 0  \Rightarrow  \bD^2 Z  = \bzero \label{eq:D-square}
\end{align}
\end{subequations}
\end{remark}
\emph{\textbf{From \eqref{eq:D-singular}, it follows that $\bD$ must be singular to accurately represent $d/dt$.  Note also that $\bD$ must be square so that higher-order derivatives can be properly represented analogous to a repetitive application of $d/dt$ as implied in \eqref{eq:D-square}.}}

\begin{remark}
Node clustering is necessary for polynomial basis functions.  It is possible to reduce node clustering through the use of non-polynomial basis functions\cite{boyd,kt,kt-map}; however, the condition number of the resulting linear algebra still grows with an increase in $N$.
\end{remark}
Problem $P^N$ is solved by the spectral algorithm\cite{PSReview-ARC-2012,spec-alg}.  A key procedure for mesh refinement in the spectral algorithm for PS methods can be described as follows:
\begin{enumerate}
\item Select an initial sequence of integers $ N_0 < N_1 < \cdots < N_m$ and generate a sequence of problems $P^{N_k}, k = 0, 1, \ldots, m $.
\item Solve Problem $N_0$ using the relaxed elastic programming concept described in \cite{spec-alg}.  Set $i=1$.
\item Solve Problem $N_i$ using the results from Problem $N_{i-1}$ by a joint application of PS convergence theory for optimal control\cite{kang-ijrc,Kang_2008_convergence,kang-rate} and the covector mapping principle\cite{ross-book}.
\item Evaluate the (decay of the) spectral coefficients for convergence and stopping criteria\cite{auto-knots,spec-alg}. Exit if all stopping criteria are met; else, set $i = i+1$, and go to Step 3.
\end{enumerate}
\emph{\textbf{Many of the advancements in the spectral algorithm are implemented in DIDO$^\copyright$\cite{scaling,ross-book}, a state-of-the-art MATLAB$^\circledR$ optimal control toolbox for solving optimal control problems.}}
As noted earlier, the condition number of the linear matrix equations involved in the spectral algorithm grows as $N_k^2, k = 0, 1, \ldots, m$; hence, there is a loss in spectral accuracy and convergence rate on those problems that require a finer mesh.  Among many, one of the current remedies for this problem is to use a pre-conditioner, $\bM$, such that the discretized differential equation in Problem $P^N$ can be written as,
\begin{equation}\label{eq:MDX}
\bM \bD X = \bM f(X,U)
\end{equation}
The matrix $\bM$ must be chosen so that the condition number of the matrix equations resulting from the product $\bM \bD$ is lower than when $\bM$ is the identity matrix.  If $\bM$ is not a diagonal matrix, then the sparsity of the right-hand-side of \eqref{eq:MDX} decreases leading to an increase in the computational burden of the resulting linear algebra. Yet another remedy for managing the growth in the condition number is to use PS knots\cite{knots}.  In fact, the full spectral algorithm incorporates an automatic determination of the number and location of knots based on a detection of rapid changes in the control function \cite{auto-knots,spec-alg}.  As noted in Section I, this approach is far from satisfactory.  Although pre-conditioning and PS knots can be used jointly to mitigate the side-effects of both concepts, the resulting algorithm involves too many tuning parameters and inconsistent performance results.  Clearly, there has been a lot of room for improvement in PS optimal control methods, particularly for those problems that require fast and accurate solutions over a fine mesh.  As will be apparent shortly, the use of Birkhoff interpolating polynomials\cite{lorentz,schoenberg} offers a new PS approach where solutions over thousands of points can be generated in a fast and stable manner.

\section{Birkhoff Interpolation Over an Arbitrary Grid}
In contrast to a Lagrange interpolant which is based on interpolating the values of a function, a Birkhoff interpolant is based on interpolating the  values of a heterogenous mix of various orders of derivatives of a function\cite{lorentz}.  That is, a Birkhoff interpolant generalizes Lagrange and Hermite interpolants\cite{schoenberg}.  For second-order boundary value problems (BVPs) with Dirichlet boundary conditions, a Birkhoff interpolation polynomial $\widetilde{x}^N(\tau)$  is given by\cite{wang},
\begin{equation}\label{eq:Wang-BVP-interp}
\widetilde{x}^N(\tau) := x(\tau_0)B_0(\tau) + \sum_{j=1}^{N-1} \ddot x(\tau_j) B_j(\tau) + x(\tau_N)B_N(\tau)
\end{equation}
where,  $[\tau_0, \tau_1, \ldots, \tau_N] :=\pi^N $ is an arbitrary grid as before, and $B_k, k = 0, \ldots, N$ are polynomials of order $N$ or less such that, if they exist, must satisfy the interpolation requirements,
\begin{align}\label{eq:BVP-2order-conditions}
\widetilde{x}^N(\tau_0) = x(\tau_0), \quad \widetilde{x}^N(\tau_N) = x(\tau_N), \quad \ddot{\widetilde{x}}^N(\tau_j) = \ddot x(\tau_j), \quad j = 1, \ldots, N-1
\end{align}
Equation \eqref{eq:BVP-2order-conditions} generates the following conditions for the Birkhoff polynomials,
\begin{equation}\label{eq:BVP-Birk-conditions}
\begin{aligned}
B_0(\tau_0) &= 1        &B_N(\tau_0) &= 0       &B_j(\tau_0) &= 0, &j = 1, \ldots, N-1\\
B_0(\tau_N) & = 0        &B_N(\tau_N) & = 1       &B_j(\tau_N) & = 0, &j = 1, \ldots, N-1 \\
\ddot B_0(\tau_i) & = 0 &\ddot B_N(\tau_i) &= 0  &\ddot B_j(\tau_i) &= \delta_{ij} & i = 1, \ldots, N-1; j =1, \ldots, N-1
\end{aligned}
\end{equation}
That is, for a second-order BVP, a Birkhoff interpolating polynomial may be based on a mix of the second derivatives of a function and its values at the boundary points.
Because an optimal control problem also generates a BVP (through an application of the necessary conditions), it seems reasonable to simply rewrite \eqref{eq:Wang-BVP-interp} for first-order systems so that state-space representation may be used. This simple adaptation generates a proposal for a Birkhoff interpolant given by,
\begin{equation}\label{eq:Wang-BVP-interp-adapt}
\overline{x}^N(\tau) := x(\tau_0)B_0(\tau) + \sum_{i=1}^{N-1} \dot x(\tau_i) B_i(\tau) + x(\tau_N)B_N(\tau)
\end{equation}
where, $B_k, k = 0, \ldots, N$ are polynomials (possibly different from those in \eqref{eq:Wang-BVP-interp}) that must satisfy,
\begin{align}\label{eq:BVP-1order-conditions}
\overline{x}^N(\tau_0) = x(\tau_0), \quad \overline{x}^N(\tau_N) = x(\tau_N), \quad \dot{\overline{x}}^N(\tau_i) = \dot x(\tau_i)
\end{align}
The conditions for \eqref{eq:BVP-1order-conditions} that are analogous to \eqref{eq:BVP-Birk-conditions} are given by,
\begin{equation}\label{eq:BVP-Birk-conditions-adapt}
\begin{aligned}
B_0(\tau_0) &= 1        &B_N(\tau_0) &= 0       &B_j(\tau_0) &= 0, &j = 1, \ldots, N-1\\
B_0(\tau_N) & = 0        &B_N(\tau_N) & = 1       &B_j(\tau_N) & = 0, &j = 1, \ldots, N-1 \\
\dot B_0(\tau_i) & = 0 &\dot B_N(\tau_i) &= 0  &\dot B_j(\tau_i) &= \delta_{ij} & i = 1, \ldots, N-1; j =1, \ldots, N-1
\end{aligned}
\end{equation}
The satisfaction of the conditions given by \eqref{eq:BVP-Birk-conditions-adapt} has some well-known problems related to the existence and uniqueness of a solution\cite{lorentz,schoenberg}; hence, we do not pursue this further.  If the conditions on $B_0$ or $B_N$ are removed from \eqref{eq:BVP-Birk-conditions-adapt}, then it can be shown that a unique Birkhoff interpolant exists\cite{finden}.
In doing the latter, a Birkhoff interpolant is given by,
\begin{align}\label{eq:Wang-IVP-interp}
\textbf{Case (a):}  && x^N(\tau) := x(\tau_0)B_0(\tau) + \sum_{j=1}^N \dot x(\tau_j) B_j(\tau)
\end{align}
where, $B_k, k = 0, \ldots, N$ must now only satisfy,
\begin{align}\label{eq:IVP-1order-conditions}
x^N(\tau_0) = x(\tau_0),  \quad \dot x^N(\tau_j) = \dot x(\tau_j), \quad j = 1, \ldots, N
\end{align}
Equation \eqref{eq:IVP-1order-conditions} generates fewer conditions than \eqref{eq:BVP-Birk-conditions-adapt}; these are given by,
\begin{equation}\label{eq:birk-case-a- conditions}
\begin{aligned}
&B_0(\tau_0) &= 1,            &&& B_j(\tau_0) &&= 0,                      \qquad & j = 1, \ldots, N \\
&\dot B_0(\tau_i) & = 0,  &&& \dot B_j(\tau_i) && = \delta_{ij},  & i = 1, \ldots, N
\end{aligned}
\end{equation}
In the same manner, an equivalent Birkhoff interpolant is given by,
\begin{align}\label{eq:Birk-interp-FVP}
\textbf{Case (b):} && x^N(\tau) := \sum_{j=0}^{N-1} \dot x(\tau_j) B_j(\tau) + x(\tau_N)B_N(\tau)
\end{align}
where, $B_k, k = 0, \ldots, N$ are yet another set of polynomials that must satisfy,
\begin{align}\label{eq:FVP-1order-conditions}
x^N(\tau_N) = x(\tau_N),  \quad \dot x^N(\tau_j) = \dot x(\tau_j), \quad j = 0, \ldots, N-1
\end{align}
The conditions for  $B_k, k = 0, \ldots, N$ in \eqref{eq:Birk-interp-FVP} are given by,
\begin{equation}\label{eq:birk-case-b-conditions}
\begin{aligned}
&B_N(\tau_N) &= 1,             &&& B_j(\tau_N) &&= 0,                       \qquad & j = 0, \ldots, N-1 \\
&\dot B_N(\tau_i) & = 0,  &&& \dot B_j(\tau_i) && = \delta_{ij},  & i = 0, \ldots, N-1
\end{aligned}
\end{equation}
It is easy to show that a Birkhoff interpolant exists for both cases given by \eqref{eq:Wang-IVP-interp} and \eqref{eq:Birk-interp-FVP}.
\begin{remark}
The Birkhoff interpolants given by \eqref{eq:Wang-IVP-interp} and \eqref{eq:Birk-interp-FVP} are over an arbitrary grid, $\pi^N$.  Endpoint node clustering is necessary for convergence (see Fig.~\ref{fig:arbGridErrors}). Consequently, Birkhoff interpolants for Jacobi-Gauss-Lobatto, Jacobi-Gauss-Radau, and Jacobi-Gauss meshes follow from \eqref{eq:Wang-IVP-interp} and \eqref{eq:Birk-interp-FVP} by an appropriate choice of $\pi^N$.

\end{remark}

It will be apparent shortly that it is convenient to define two subsets of the grid $\pi^N$ given by,
\begin{subequations}
\begin{align}
\pi^N_a &:= [\tau_1, \ldots, \tau_N]\\
\pi^N_b &:= [\tau_0, \tau_1, \ldots, \tau_{N-1}]
\end{align}
\end{subequations}
Thus, $\pi^N$ may be represented either by $\pi^N = [\tau_0, \pi^N_a]$ or $\pi^N = [\pi^N_b, \tau_N]$. Furthermore, we define the matrices,
\begin{equation}\label{eq:DBdef4ab}
\begin{aligned}
\bD_a &:= \Big(\dot L_j(\tau_i)  \Big)_{1\le i,j \le N}  &\bB_a &:= \Big( B_j(\tau_i)  \Big)_{1\le i,j \le N}\\
\bD_b &:= \Big(\dot L_j(\tau_i)  \Big)_{0\le i,j \le N-1}  &\bB_b &:= \Big( B_j(\tau_i)  \Big)_{0\le i,j \le N-1}
\end{aligned}
\end{equation}
where, $\bB_a$ and $\bB_b$ satisfy \eqref{eq:birk-case-a- conditions} and \eqref{eq:birk-case-b-conditions} respectively.
%
\begin{theorem}[Wang-Samson-Zhao]\label{theorem:WSZ}
Let $\omega \in \set{a, b}$. Then,
$$\bD_\omega \bB_\omega = \bI_N  $$
where, $\bI_N$ is an $N\times N$ identity matrix.
\end{theorem}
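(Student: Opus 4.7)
The plan is to exploit a duality between the Birkhoff basis and the Lagrange differentiation formula by running a single polynomial of degree at most $N$ through both representations and reading off the identity from the comparison.

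I will treat case $\omega = a$ first; case $\omega = b$ is the mirror image. Fix an arbitrary vector $Y = [y_1, \ldots, y_N]^T \in \real{N}$ and define
$$
p(\tau) := \sum_{j=1}^N y_j B_j(\tau),
$$
which is a polynomial of degree at most $N$ since each $B_j$ is. The interpolation conditions \eqref{eq:birk-case-a- conditions} immediately give $p(\tau_0) = 0$ together with $\dot p(\tau_i) = y_i$ for $i = 1, \ldots, N$, and the vector of interior nodal values $P := [p(\tau_1), \ldots, p(\tau_N)]^T$ equals $\bB_a Y$ by the definition of $\bB_a$.

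Next, because $p$ has degree at most $N$, it coincides with its Lagrange interpolant on the full grid $\pi^N$, so
$$
\dot p(\tau_i) = \sum_{j=0}^N p(\tau_j)\,\dot L_j(\tau_i), \qquad i = 1, \ldots, N.
$$
The $j = 0$ term drops out because $p(\tau_0) = 0$, leaving $\dot p(\tau_i) = (\bD_a P)_i = (\bD_a \bB_a Y)_i$. Comparing with $\dot p(\tau_i) = y_i$ gives $\bD_a \bB_a Y = Y$; since $Y$ was arbitrary, $\bD_a \bB_a = \bI_N$. For $\omega = b$ I would run the same argument starting from $p(\tau) := \sum_{j=0}^{N-1} y_j B_j(\tau)$ with the case-(b) polynomials; now \eqref{eq:birk-case-b-conditions} delivers $p(\tau_N) = 0$ and $\dot p(\tau_i) = y_i$ for $i = 0, \ldots, N-1$, so the $j = N$ term drops out of the Lagrange sum and the identity collapses to $\bD_b \bB_b = \bI_N$.

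The main conceptual step — and really the only nontrivial one — is recognizing that the Birkhoff basis delivers a polynomial of degree $\le N$ whose value at the anchor node vanishes, which is exactly what is needed to collapse the full $(N+1)\times(N+1)$ Lagrange differentiation formula onto the square $N \times N$ sub-block $\bD_\omega$. Beyond that, everything is bookkeeping, and the existence of the $B_j$ (stated after \eqref{eq:birk-case-b-conditions} and cited to \cite{finden}) may be taken as given.
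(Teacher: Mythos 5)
Your proof is correct and is essentially the paper's own argument: both rest on the exactness of the Lagrange differentiation formula for polynomials of degree at most $N$, the vanishing of the Birkhoff basis at the anchor node ($\tau_0$ for case (a), $\tau_N$ for case (b)) to drop the extra column, and the conditions $\dot B_j(\tau_i)=\delta_{ij}$ to produce the identity. The only cosmetic difference is that you verify the identity by acting on an arbitrary vector $Y$ whereas the paper sets $\phi=B_j$ and reads off the $(i,j)$ entry directly.
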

%
\begin{proof}
The proof of this result for $\omega = a$ is implied in \cite{wang} by Theorems 3.2 and 4.1.  For the purposes of completeness and clarity, we prove this theorem for an arbitrary grid by the same procedures used in \cite{wang}.  Furthermore, because the matrix $\bB_b$ is not presented in \cite{wang}, we first prove this result for $\omega = b$.  The proof for $\omega = a$ follows by a trivial extension.

For any $N^{th}$-order polynomial $\phi$, we can write,
\begin{equation}\label{eq:proof-1}
\phi(\tau) = \sum_{k=0}^N \phi(\tau_k) L_k(\tau) \Rightarrow \dot\phi(\tau) = \sum_{k=0}^N \phi(\tau_k) \dot L_k(\tau)
\end{equation}
Setting $\phi(\tau) = B_j(\tau)$ in \eqref{eq:proof-1} and evaluating the resulting equation at $\tau_i$, we get,
\begin{equation}\label{eq:proof-2}
\dot B_j(\tau_i) = \sum_{k=0}^N B_j(\tau_k) \dot L_k(\tau_i)
\end{equation}
We can rewrite \eqref{eq:proof-2} for Case (b) evaluation as,
\begin{equation}\label{eq:proof-3}
\dot B_j(\tau_i) = \sum_{k=0}^{N-1} B_j(\tau_k) \dot L_k(\tau_i) + B_j(\tau_N) \dot L_N(\tau_i)
\end{equation}
Using \eqref{eq:birk-case-b-conditions} to evaluate \eqref{eq:proof-3} we get,
\begin{equation}\label{eq:proof-4}
\delta_{ij} = \sum_{k=0}^{N-1} B_j(\tau_k) \dot L_k(\tau_i), \quad 0 \le i, j \le N-1
\end{equation}
From \eqref{eq:DBdef4ab}, it is clear that \eqref{eq:proof-4} is the same as $\bD_\omega \bB_\omega = \bI_N  $ for $\omega = b$.  It is also apparent that the proof for $\omega = a$ follows by a similar procedure.
\end{proof}

\section{A Family of Well-Conditioned PS Optimal Control Methods}

In computational optimal control, it is apparent that it is important to generate a reasonably accurate control trajectory $[t_0, t_f] \ni t \mapsto \bu$.  Furthermore, it is more important to generate an accurate value of the initial control than its final value because any error in $\bu(t_0)$ will be amplified in terms of propagating the state trajectory through the dynamics $\dot\bx = \bff(\bx, \bu(t))$; see \cite{scaling,ross-book} for details.  This implies that the constraint at the initial point $\dot\bx(t_0) = \bff(\bx(t_0), \bu(t_0))$ must be well-represented. In \cite{wang}, initial value problems (IVPs) are solved using the matrix $\bB_a$ and the differential equation is not imposed at the initial point. Because this approach is not satisfactory for optimal control applications, we need to modify existing results on Birkhoff interpolation for optimal control applications.

\subsection{Birkhoff PS Optimal Control Method Over an Arbitrary Grid Based on \eqref{eq:Wang-IVP-interp} }
In applying the results of the previous section to Problem $P$, we set,
\begin{equation}\label{eq:birk-a}
x^N(\tau) :=x_0B_0(\tau) + \sum_{j=1}^N v_j B_j(\tau)
\end{equation}
where, $x_0$ and $v_j, j = 1, \ldots, N$ are the unknown optimization variables over an arbitrary grid $\pi^N$, and $B_k, k = 0, 1, \ldots, N$ satisfy \eqref{eq:birk-case-a- conditions}.
Differentiating both sides of \eqref{eq:birk-a} we get,
\begin{equation}\label{eq:dotxN-birk-a}
\dot x^N(\tau) = x_0\dot B_0(\tau) + \sum_{j=1}^N v_j \dot B_j(\tau)
\end{equation}
Substituting \eqref{eq:dotxN-birk-a} in the dynamic constraint $\dot x^N(\tau) = f(x^N(\tau), u^N(\tau))$, we get,
\begin{equation}\label{eq:birk-a-xdot-forall}
x_0\dot B_0(\tau) + \sum_{j=1}^N v_j \dot B_j(\tau) = f(x^N(\tau), u^N(\tau))
\end{equation}
Evaluating \eqref{eq:birk-a-xdot-forall} over the grid $\pi^N = [\tau_0, \pi^N_a]$ we get,
\begin{align}
I_a \, V_a &= f(x_0, u_0) - x_0 \dot B_0(\tau_0) \label{eq:IaVa=}\\
V_a &= f(X_a, U_a)\label{eq:Va=f}
\end{align}
where, $I_a, V_a$ and $X_a$ are given by,
\begin{equation}\label{eq:a-var-def}
\begin{aligned}
I_a &:= [\dot B_1(\tau_0), \ldots, \dot B_N(\tau_0)] \\
V_a &:= [v_1, \ldots, v_N]^T\\
X_a &:= [x^N(\tau_1), \ldots, x^N(\tau_N)]^T
\end{aligned}
\end{equation}
The vector $X_a$ can be evaluated using \eqref{eq:birk-a}; this generates the constraint,
\begin{equation}\label{eq:Xa-eval}
X_a = x_0\, \bb_0 + \bB_a V_a
\end{equation}
where,
\begin{equation}\label{eq:b0-def}
\bb_0 := \left(
                  \begin{array}{c}
                    B_0(\tau_1) \\
                    B_0(\tau_2) \\
                    \vdots \\
                    B_0(\tau_N) \\
                  \end{array}
                \right)
\end{equation}
Collecting all the relevant equations, we define
\begin{eqnarray}
&X \in \real{N+1}, \quad U \in \real{N+1}, \quad V_a \in \real{N}   & \nonumber\\
& (\textsf{$P_a^N$}) \left\{
\begin{array}{lrl}
\emph{Minimize } & J^N_a[X, U, V_a] :=& E(x_0,x_N)\\
\emph{Subject to}& V_a =& f(X_a, U_a)  \\
& X_a  = &x_0\, \bb_0 + \bB_a V_a \\
& I_a \, V_a =& f(x_0, u_0) - x_0 \dot B_0(\tau_0)\\
& e(x_0, x_N)  = & 0
\end{array} \right. & \label{eq:ProbPNa}
\end{eqnarray}
\begin{remark}
The Birkhoff equality constraint $I_a \, V_a + x_0 \dot B_0(\tau_0)- f(x_0, u_0) = 0$ in Problem $P^N_a$ imposes the differential equation $\dot x(\tau) = f(x(\tau), u(\tau)) $  at the initial point $\tau_0$.  This Birkhoff constraint is equivalent to imposing the same differential equation via the Lagrange condition $\sum_{j=0}^N x_j \dot L_j (\tau_0) - f(x^N(\tau_0), u^N(\tau_0)) = 0$.  This equivalency is proved as Proposition \ref{proposition-xdot0}.
\end{remark}
%
\begin{proposition}\label{proposition-xdot0}
Let $x_j = x^N(\tau_j)$. At $\tau = \tau_0$, the Lagrange and Birkhoff interpolants satisfy the condition,
\begin{equation}
\sum_{j=0}^N x_j \dot L_j (\tau_0) = x_0 \dot B_0(\tau_0) + \sum_{j=1}^N v_j \dot B_j(\tau_0) \label{eq:xdot0-a}
\end{equation}
\end{proposition}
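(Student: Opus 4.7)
The plan is to exploit uniqueness of polynomial interpolation: the Lagrange and Birkhoff representations define the \emph{same} polynomial of degree at most $N$, so their derivatives must agree everywhere (and in particular at $\tau_0$).

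First I would note that, by construction, each $B_k$ in \eqref{eq:birk-a} is a polynomial of degree at most $N$, so the Birkhoff interpolant
\[
x^N(\tau) = x_0 B_0(\tau) + \sum_{j=1}^N v_j B_j(\tau)
\]
is itself a polynomial in $\tau$ of degree at most $N$. Differentiating this representation and evaluating at $\tau_0$ immediately gives the right-hand side of \eqref{eq:xdot0-a}, namely $\dot x^N(\tau_0) = x_0 \dot B_0(\tau_0) + \sum_{j=1}^N v_j \dot B_j(\tau_0)$.

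Second I would invoke the standing hypothesis $x_j = x^N(\tau_j)$, $j = 0,\ldots,N$. Because $x^N$ has degree at most $N$ and the nodes $\tau_0<\tau_1<\cdots<\tau_N$ are distinct, the Lagrange interpolation identity is exact, not merely an approximation:
\[
x^N(\tau) \;=\; \sum_{j=0}^N x_j\, L_j(\tau),\qquad \tau\in\Real.
\]
Differentiating this identity and evaluating at $\tau_0$ yields $\dot x^N(\tau_0) = \sum_{j=0}^N x_j \dot L_j(\tau_0)$, which is the left-hand side of \eqref{eq:xdot0-a}. Equating the two expressions for $\dot x^N(\tau_0)$ finishes the argument.

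There really is no single ``hard step'' here; the only point that needs care is verifying that the Birkhoff conditions \eqref{eq:birk-case-a- conditions} are consistent with defining $x^N$ as a polynomial of degree at most $N$ (so that uniqueness of polynomial interpolation applies). Once that observation is in place, the identity is just the statement that two representations of one and the same degree-$N$ polynomial have equal derivatives at $\tau_0$. An entirely analogous argument would establish the corresponding identity at $\tau_N$ for the Case~(b) interpolant \eqref{eq:Birk-interp-FVP}, should it be needed later.
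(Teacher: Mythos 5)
Your proof is correct, but it takes a different route from the paper's. The paper works at the level of the individual basis polynomials: it writes $\dot B_j(\tau_i) = \sum_{k=0}^N \dot L_k(\tau_i)\,B_j(\tau_k)$ (exactness of Lagrange differentiation applied to each $B_j$), specializes to $i=0$, and then expands both sides of \eqref{eq:xdot0-a} into double sums that are matched term by term. You instead apply the same underlying fact --- a polynomial of degree at most $N$ is reproduced exactly by its Lagrange interpolant on $N+1$ distinct nodes --- once, to the whole Birkhoff interpolant $x^N$, and conclude that the two representations are literally the same polynomial, so their derivatives agree everywhere. Your argument is shorter and strictly stronger: it establishes the identity at every $\tau$, not just at $\tau_0$, and as you note it disposes of the Case~(b) analogue (which the paper states without proof) for free. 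What the paper's computational route buys in exchange is the explicit identity \eqref{eq:dotB=DB} and its specializations, which are reused later in the proof of Proposition~\ref{proposition:stuff=0}; your approach would need to derive that identity separately there. The one hypothesis you lean on --- that each $B_k$ is a polynomial of degree at most $N$ --- is indeed part of the paper's construction, so no gap.
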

%
\begin{proof}
The derivative $\dot B_j(\tau_i)$ can be obtained by an application of the PS differentiation matrix; hence, we have,
\begin{equation}\label{eq:dotB=DB}
\dot B_j(\tau_i) = \sum_{k=0}^N \dot L_k(\tau_i) B_j(\tau_k)
\end{equation}
Setting $i = j = 0$ in \eqref{eq:dotB=DB} we get,
\begin{align}
\dot B_0(\tau_0) &= \sum_{k=0}^N \dot L_k(\tau_0) B_0(\tau_k) \nonumber \\
                 & = \dot L_0(\tau_0) B_0(\tau_0) + \sum_{k=1}^N \dot L_k(\tau_0) B_0(\tau_k) \nonumber\\
                 & = \dot L_0(\tau_0) + \sum_{k=1}^N \dot L_k(\tau_0) B_0(\tau_k)\label{eq:dotB00}
\end{align}
where, the last equality in \eqref{eq:dotB00} follows from the fact that $B_0(\tau_0) = 1$; see \eqref{eq:birk-case-a- conditions}.

Similarly, setting $i = 0$ in \eqref{eq:dotB=DB} we get,
\begin{align}
\dot B_j(\tau_0) &= \sum_{k=0}^N \dot L_k(\tau_0) B_j(\tau_k) \nonumber\\
                &= \dot L_0(\tau_0) B_j(\tau_0)  + \sum_{k=1}^N \dot L_k(\tau_0) B_j(\tau_k) \nonumber \\
                & = \sum_{k=1}^N \dot L_k(\tau_0) B_j(\tau_k) \label{eq:dotB0j}
\end{align}
Evaluating the right-hand-side of \eqref{eq:xdot0-a} using \eqref{eq:dotB00} and \eqref{eq:dotB0j} we have,
\begin{equation}\label{eq:rhs-a}
x_0 \dot B_0(\tau_0) + \sum_{j=1}^N v_j \dot B_j(\tau_0) =  x_0\dot L_0(\tau_0) + \sum_{k=1}^N x_0 \dot L_k(\tau_0) B_0(\tau_k) + \sum_{j=1}^N v_j \sum_{k=1}^N \dot L_k(\tau_0) B_j(\tau_k)
\end{equation}
Next, from \eqref{eq:birk-a}, we have,
\begin{equation}\label{eq:birk-a-redux}
x_k = x_0 B_0(\tau_k) + \sum_{j=1}^N v_j B_j(\tau_k)
\end{equation}
Evaluating the left-hand-side of \eqref{eq:xdot0-a} using \eqref{eq:birk-a-redux} we get,
\begin{align}
\sum_{k=0}^N x_k \dot L_k (\tau_0) &= x_0 \dot L_0 (\tau_0) + \sum_{k=1}^N x_k \dot L_k (\tau_0) \nonumber\\
&= x_0 \dot L_0 (\tau_0) +  \sum_{k=1}^N x_0 B_0(\tau_k) \dot L_k(\tau_0) + \sum_{k=1}^N \sum_{j=1}^N v_j B_j(\tau_k) \dot L_k(\tau_0) \label{eq:lhs-a}
\end{align}
It is evident that the right-hand-sides of \eqref{eq:rhs-a} and \eqref{eq:lhs-a} are identical.
\end{proof}
%

\subsection{Birkhoff PS Optimal Control Method Over an Arbitrary Grid Based on \eqref{eq:Birk-interp-FVP}}

Although many of the results are similar to those resulting from \eqref{eq:birk-a}, we provide a brief development here to highlight the key differences between the two approaches.  To this end, we set,
\begin{equation}\label{eq:birk-b}
x^N(\tau) :=\sum_{j=0}^{N-1} v_j B_j(\tau) + x_N B_N(\tau)
\end{equation}
Substituting the \eqref{eq:birk-b} in the dynamic constraint we get,
\begin{equation}\label{eq:dotxN-birk-b}
\sum_{j=0}^{N-1} v_j \dot B_j(\tau) + x_N\dot B_N(\tau) = f(x^N(\tau), u^N(\tau))
\end{equation}
Evaluating \eqref{eq:dotxN-birk-b} over the grid $\pi^N = [\pi^N_b, \tau_N]$ we get,
\begin{align}
V_b &= f(X_b, U_b)\\
I_b \, V_b &= f(x_N, u_N) - x_N \dot B_N(\tau_N)
\end{align}
where, $I_b, V_b$ and $X_b$ are given by,
\begin{equation}
\begin{aligned}
I_b &:= [\dot B_0(\tau_N), \ldots, \dot B_{N-1}(\tau_N)] \\
V_b &:= [v_0, \ldots, v_{N-1}]^T\\
X_b &:= [x^N(\tau_0), \ldots, x^N(\tau_{N-1})]^T
\end{aligned}
\end{equation}
Using \eqref{eq:birk-b} to evaluate $X_b$ we get,
\begin{equation}\label{eq:Xb-eval}
X_b = \bB_b V_b + x_N\, \bb_N
\end{equation}
where,
\begin{equation}\label{eq:bN-def}
\bb_N := \left(
                  \begin{array}{c}
                    B_N(\tau_0) \\
                    B_N(\tau_1) \\
                    \vdots \\
                    B_N(\tau_{N-1}) \\
                  \end{array}
                \right)
\end{equation}
Thus, we now have a new discretization of Problem $P$ given by,
\begin{eqnarray}
&X \in \real{N+1}, \quad U \in \real{N+1}, \quad V_b \in \real{N}   & \nonumber\\
& (\textsf{$P_b^N$}) \left\{
\begin{array}{lrl}
\emph{Minimize } & J^N_b[X, U, V_b] :=& E(x_0,x_N)\\
\emph{Subject to}& V_b =& f(X_b, U_b)  \\
& X_b  = &\bB_b V_b  + x_N\, \bb_N  \\
& I_b \, V_b =& f(x_N, u_N) - x_N \dot B_N(\tau_N)\\
& e(x_0, x_N)  = & 0
\end{array} \right. & \label{eq:ProbPNb}
\end{eqnarray}
\begin{remark}
In Problem $P^N_b$, the differential constraint $\dot x(\tau) - f(x(\tau), u(\tau)) = 0$  at the initial point $\tau_0$ is imposed via the constraint $V_b-f(X_b, U_b) = 0$.
The Birkhoff equality constraint $I_b \, V_b + x_N \dot B_N(\tau_N)- f(x_N, u_N) = 0$  imposes the differential constraint at the final time.  This Birkhoff condition is equivalent to imposing the same differential constraint via the Lagrange condition $\sum_{j=0}^N x_j \dot L_j (\tau_N) - f(x^N(\tau_N), u^N(\tau_N)) =0$.  The proof of this equivalency is similar to Proposition 1; hence, it is omitted.
\end{remark}

\subsection{Several Pre-Conditioned Lagrange PS Optimal Control Methods}
It is possible to develop several pre-conditioned Lagrange PS optimal control methods based on using  Birkhoff matrices as pre-conditioners.  We first consider the case where we partition the mesh $\pi^N$ as $[\tau_0, \pi^N_a]$.

\subsubsection{Left-Preconditioned Lagrange PS Method Based on $[\tau_0, \pi^N_a]$ Partitioning }
It is convenient to rewrite \eqref{eq:Lag-x} as,
\begin{equation}\label{eq:Lag-x-a}
x^N(\tau):=  x_0 L_0(\tau) + \sum_{i=1}^{N} x_i L_i(\tau)
\end{equation}
Differentiating \eqref{eq:Lag-x-a} and substituting the result in the dynamic constraint $\dot x = f(x, u)$, we get,
\begin{equation}\label{eq:Lag-col-a}
x_0\dot L_0(\tau) + \sum_{i=1}^{N} x_i \dot L_i(\tau) = f(x^N(\tau), u^N(\tau))
\end{equation}
Evaluating \eqref{eq:Lag-col-a} over the grid $\pi^N = [\tau_0, \pi^N_a]$ we get,
\begin{subequations}\label{eq:DX=f-case-a}
\begin{align}
\sum_{i=0}^{N} x_i \dot L_i(\tau_0) &= f(x_0, u_0) \label{eq:DX=f-case-a-zero}\\
\bD_a X_a &= f(X_a, U_a) - x_0\,\bl_0   \label{eq:DX=f-case-a-int}
\end{align}
\end{subequations}
where, $X_a$ and $U_a$ are defined in \eqref{eq:a-var-def} and $\bl_0$ is defined by,
\begin{equation}\label{eq:l0-def}
\bl_0 := \left(
          \begin{array}{c}
            \dot L_0(\tau_1) \\
            \dot L_0(\tau_2) \\
            \vdots \\
            \dot L_0(\tau_N) \\
          \end{array}
        \right)
\end{equation}
Note that \eqref{eq:DX=f-case-a} is merely a partitioned version of the discrete differential constraint stipulated in Problem $P^N$ defined in \eqref{eq:ProbPN}. Pre-multiplying both sides of \eqref{eq:DX=f-case-a-int} by $\bB_a$ and using Theorem \ref{theorem:WSZ} we get,
\begin{equation}\label{eq:DX=f-left-pre}
X_a = \bB_a f(X_a, U_a) - x_0 \bB_a\bl_0
\end{equation}
As a result of \eqref{eq:DX=f-left-pre}, Problem $P^N$ transforms according to,
\begin{eqnarray}
&X \in \real{N+1}, \quad U \in \real{N+1}   & \nonumber\\
& (\text{Left-$P^N_a$}) \left\{
\begin{array}{lrl}
\emph{Minimize } & J^N[X, U] :=& E(x_0,x_N)\\
\emph{Subject to} &\sum_{i=0}^{N} x_i \dot L_i(\tau_0) =& f(x_0, u_0)\\
& X_a = & \bB_a f(X_a, U_a) - x_0 \bB_a\bl_0 \\
& e(x_0, x_N)  = & 0
\end{array} \right. & \label{eq:left-ProbPNa}
\end{eqnarray}

\subsubsection{Right-Preconditioned Lagrange PS Method Based on $[\tau_0, \pi^N_a]$ Partitioning }
Instead of pre-multiplying equations to achieve better conditioning, we can generate an alternative method by post-multiplying only the left-hand-side of \eqref{eq:DX=f-case-a-int} by $B_a$.  To clarify this point, we substitute \eqref{eq:Xa-eval} in \eqref{eq:DX=f-case-a-int}; this results in,
\begin{equation}\label{eq:Va=f+stuff}
V_a + x_0 \bD_a \bb_0 = f(X_a, U_a) - x_0\bl_0
\end{equation}
Comparing \eqref{eq:Va=f+stuff} with \eqref{eq:Va=f}, it is evident that we need to prove that $ x_0 \bD_a \bb_0 = - x_0\bl_0$.  This result is proved as Proposition \ref{proposition:stuff=0}.
\begin{proposition}\label{proposition:stuff=0}
$ \bD_a \bb_0 + \bl_0 = \bzero$
\end{proposition}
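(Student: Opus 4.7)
The plan is to prove the identity componentwise by expressing each $\dot B_0(\tau_i)$ via the Lagrange differentiation matrix acting on the nodal values of $B_0$, and then using the defining conditions \eqref{eq:birk-case-a- conditions} of the Birkhoff polynomial $B_0$ to collapse the resulting sum. The key observation is that the required relation is nothing more than equation \eqref{eq:dotB=DB} from the proof of Proposition \ref{proposition-xdot0}, applied in the special case $j=0$, $i\in\{1,\dots,N\}$.

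Concretely, I would first recall from \eqref{eq:dotB=DB} that for any $i,j$,
\begin{equation*}
\dot B_j(\tau_i) \;=\; \sum_{k=0}^{N} \dot L_k(\tau_i)\, B_j(\tau_k).
\end{equation*}
Setting $j=0$ and letting $i$ range over $\{1,\dots,N\}$, I would split off the $k=0$ term and use $B_0(\tau_0)=1$ (from \eqref{eq:birk-case-a- conditions}) to obtain
\begin{equation*}
\dot B_0(\tau_i) \;=\; \dot L_0(\tau_i) \;+\; \sum_{k=1}^{N} \dot L_k(\tau_i)\, B_0(\tau_k), \qquad i=1,\dots,N.
\end{equation*}
The Birkhoff condition $\dot B_0(\tau_i)=0$ for $i=1,\dots,N$, also from \eqref{eq:birk-case-a- conditions}, then yields
\begin{equation*}
0 \;=\; \dot L_0(\tau_i) \;+\; \sum_{k=1}^{N} \dot L_k(\tau_i)\, B_0(\tau_k), \qquad i=1,\dots,N.
\end{equation*}

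Finally, I would recognize the sum on the right as the $i$-th entry of $\bD_a\bb_0$ by the definitions in \eqref{eq:DBdef4ab} and \eqref{eq:b0-def}, while $\dot L_0(\tau_i)$ is the $i$-th entry of $\bl_0$ by \eqref{eq:l0-def}. Stacking the $N$ scalar identities into a single vector equation gives $\bD_a\bb_0 + \bl_0 = \bzero$, completing the proof. There is no real obstacle here; the argument is a direct and short application of \eqref{eq:dotB=DB} together with the interpolation conditions that define $B_0$, so the main thing is to be careful about which index ranges (whether the row $\tau_0$ is included or excluded) pick out $\bD_a$ versus the full differentiation matrix.
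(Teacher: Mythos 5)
Your proof is correct and follows essentially the same route as the paper's: both apply \eqref{eq:dotB=DB} with $j=0$ over the rows $i=1,\dots,N$, invoke $B_0(\tau_0)=1$ and $\dot B_0(\tau_i)=0$ from \eqref{eq:birk-case-a- conditions}, and identify the remaining sum with $\bD_a\bb_0$ and $\bl_0$. The only difference is that you work componentwise while the paper stacks the identities into vector form directly.
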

\begin{proof}
From \eqref{eq:dotB=DB}, we have,
\begin{equation}\label{eq:dotB=DB-case-a}
\dot B_j(\tau_i) =\dot L_0(\tau_i) B_j(\tau_0) + \sum_{k=1}^N \dot L_k(\tau_i) B_j(\tau_k)
\end{equation}
Evaluating \eqref{eq:dotB=DB-case-a} for $j=0$ and  $i= 1, \ldots N$, we get,
\begin{equation}\label{eq:dotB-0-case-a-int}
\left(
  \begin{array}{c}
    \dot B_0(\tau_1) \\
    \vdots \\
    \dot B_0(\tau_N) \\
  \end{array}
\right) = \left(
  \begin{array}{c}
    \dot L_0(\tau_1) \\
    \vdots \\
    \dot L_0(\tau_N) \\
  \end{array}
\right) B_0(\tau_0) + \bD_a \bb_0
\end{equation}
The left-hand-side of \eqref{eq:dotB-0-case-a-int} is zero from \eqref{eq:birk-case-a- conditions}; hence, the proposition follows from \eqref{eq:l0-def} and the fact that $B_0(\tau_0) = 1$ by definition.
\end{proof}

As a result of Proposition \ref{proposition:stuff=0}, \eqref{eq:Va=f+stuff} simplifies to $V_a = f(X_a, U_a)$. Using Proposition \ref{proposition-xdot0} to replace \eqref{eq:DX=f-case-a-zero}, Problem $P^N$ now transforms to a ``right-preconditioned'' method given by,
\begin{eqnarray}
&X \in \real{N+1}, \quad U \in \real{N+1}, \quad V_a \in \real{N}   & \nonumber\\
& (\text{right-$P_a^N$}) \left\{
\begin{array}{lrl}
\emph{Minimize } & J^N_a[X, U, V_a] :=& E(x_0,x_N)\\
\emph{Subject to} & I_a \, V_a =& f(x_0, u_0) - x_0 \dot B_0(\tau_0)\\
& V_a =& f(X_a, U_a)  \\
& X_a  = &x_0\, \bb_0 + \bB_a V_a \\
& e(x_0, x_N)  = & 0
\end{array} \right. & \label{eq:rightProbPNa}
\end{eqnarray}
It is apparent that \eqref{eq:rightProbPNa} is identical to Problem $P^N_a$ defined by \eqref{eq:ProbPNa}.  In other words, the Birkhoff-PS method (for Case (a)) may be viewed as a right-pre-conditioned Lagrange-PS method.

\begin{proposition}
If the variable $V_a$ is eliminated from Problem right-$P^N_a$, it reduces to Problem left-$P^N_a$.
\end{proposition}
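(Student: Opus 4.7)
The plan is to carry out the substitution $V_a = f(X_a, U_a)$ (which is the second constraint of Problem right-$P_a^N$) and show that the three remaining constraints of right-$P_a^N$ collapse onto the three constraints of Problem left-$P_a^N$ defined in \eqref{eq:left-ProbPNa}. The cost $E(x_0,x_N)$ and the event constraint $e(x_0,x_N)=0$ depend on neither $V_a$ nor any internal node, so they carry over unchanged. The work is therefore to dispatch the dynamic constraint $X_a = x_0\,\bb_0 + \bB_a V_a$ and the boundary Birkhoff constraint $I_a\,V_a = f(x_0,u_0) - x_0\,\dot B_0(\tau_0)$.

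First I would tackle the state-equation block. After eliminating $V_a$, the constraint $X_a = x_0\,\bb_0 + \bB_a V_a$ becomes $X_a = x_0\,\bb_0 + \bB_a f(X_a,U_a)$. Matching this against the constraint $X_a = \bB_a f(X_a,U_a) - x_0\,\bB_a \bl_0$ in Problem left-$P_a^N$, the equivalence reduces to the matrix identity $\bb_0 + \bB_a \bl_0 = \bzero$. To establish this, I would invoke Proposition~\ref{proposition:stuff=0}, which gives $\bD_a \bb_0 + \bl_0 = \bzero$, and then multiply on the left by $\bB_a$. By Theorem~\ref{theorem:WSZ} we have $\bD_a \bB_a = \bI_N$, and since $\bD_a$ and $\bB_a$ are square matrices of the same order, $\bB_a \bD_a = \bI_N$ as well; consequently $\bB_a \bD_a \bb_0 = \bb_0$ and the desired identity drops out.

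Next I would handle the boundary block. With $V_a = f(X_a, U_a)$ substituted in, the $j$-th entry of $I_a V_a$ is $\sum_{j=1}^N v_j\,\dot B_j(\tau_0)$, so the constraint $I_a V_a = f(x_0,u_0) - x_0\,\dot B_0(\tau_0)$ can be rearranged as
\begin{equation*}
x_0\,\dot B_0(\tau_0) + \sum_{j=1}^N v_j\,\dot B_j(\tau_0) = f(x_0, u_0).
\end{equation*}
The left-hand side is precisely the quantity that Proposition~\ref{proposition-xdot0} identifies with $\sum_{j=0}^N x_j\,\dot L_j(\tau_0)$. Invoking that proposition, the boundary constraint becomes $\sum_{j=0}^N x_j\,\dot L_j(\tau_0) = f(x_0,u_0)$, which is exactly the first constraint appearing in Problem left-$P_a^N$. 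Combining the two reductions yields the stated equivalence.

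The main obstacle, such as it is, is bookkeeping: one must be careful that the propositions being invoked apply verbatim after substitution, i.e.\ that the $v_j$'s appearing in Proposition~\ref{proposition-xdot0} can legitimately be identified with the components of $f(X_a,U_a)$ once $V_a$ has been eliminated. Since $v_j$ is, by its defining relation in \eqref{eq:birk-a}--\eqref{eq:dotxN-birk-a}, nothing more than the (unknown) value of $\dot x^N(\tau_j)$ in Case (a) and since the constraint $V_a = f(X_a,U_a)$ simply pins that value to the right-hand side of the ODE, the identification is automatic. No new computation is needed beyond citing Theorem~\ref{theorem:WSZ} and Propositions~\ref{proposition-xdot0} and~\ref{proposition:stuff=0}.
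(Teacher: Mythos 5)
Your proof is correct and follows essentially the same route as the paper: split the verification into the state-equation block and the boundary block, use Proposition~\ref{proposition:stuff=0} together with Theorem~\ref{theorem:WSZ} to show $\bb_0 = -\bB_a\bl_0$, and invoke Proposition~\ref{proposition-xdot0} for the initial-point constraint. If anything, you are slightly more explicit than the paper in noting that $\bD_a\bB_a = \bI_N$ for square matrices implies $\bB_a\bD_a = \bI_N$, a step the paper leaves implicit; the only blemish is calling the scalar $I_a V_a$ a ``$j$-th entry,'' which does not affect the argument.
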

\begin{proof}
Comparing \eqref{eq:left-ProbPNa} with \eqref{eq:rightProbPNa}, it follows that this proposition is proved if we can show that:
\begin{itemize}
\item[(1)] $\bB_a f(X_a, U_a) - x_0 \bB_a\bl_0 = x_0\, \bb_0 + \bB_a V_a$, and
\item[(2)] the equation $\sum_{i=0}^{N} x_i \dot L_i(\tau_0) = f(x_0, u_0)$ is equivalent to  $I_a \, V_a = f(x_0, u_0) - x_0 \dot B_0(\tau_0)$
\end{itemize}
Hence, we prove this proposition in two parts.

\emph{Part (1):}

Substituting \eqref{eq:Va=f} in \eqref{eq:Xa-eval}, we get,
\begin{align}
x_0\, \bb_0 + \bB_a V_a &= x_0\bb_0 + \bB_a f(X_a, U_a) \nonumber\\
        & =  -x_0\bB_a\bl_0 + \bB_a f(X_a, U_a) \label{eq:part1-qed}
\end{align}
where, the last equality in \eqref{eq:part1-qed} follows from Proposition \eqref{proposition:stuff=0} and Theorem \ref{theorem:WSZ}.

\emph{Part (2):}

The proof of this part is a direct consequence of Proposition \ref{proposition-xdot0}.
\end{proof}

\begin{remark}
It is clear that two additional left and right pre-conditioned Lagrange-PS methods can be similarly obtained by using the (Case (b)) partitioning $\pi^N = [\pi^N_b, \tau_N]$.  For the purposes of brevity, these cases are not derived.
\end{remark}

\begin{remark}
For Case (a), it is straightforward to show that $B_0(\tau) = 1$ for all $\tau \in [\tau_0, \tau_N]$.  Consequently, $\bb_0$ defined in \eqref{eq:b0-def} is a vector of ones and $\dot B_0(\tau_0) = 0$.  Similarly, for Case (b), $B_N(\tau) = 1$ for all $\tau \in [\tau_0, \tau_N]$; hence, $\bb_N$ defined in \eqref{eq:bN-def} is a vector of ones and $\dot B_N(\tau_N) = 0$.
\end{remark}

\begin{remark}\label{remark:Birkhoff-galore}
All PS methods proposed in this section are for an arbitrary grid.  It is possible to generate a very large sub-class of each of these methods by choosing slightly different and well-known grid points. Popular examples of such grid points are Chebyshev-Gauss-Lobatto (CGL), Legendre-Gauss-Lobatto (LGL), Chebyshev-Gauss-Radau (CGR), Legendre-Gauss-Radau (LGR), Chebyshev-Gauss (CG) and Legendre-Gauss (LG).  See \cite{PSReview-ARC-2012,advances,arb-grid} for further details on advantages and pitfalls associated with the choice of these mesh points as they apply to computational optimal control problems.
\end{remark}
Because it is possible to generate a very large family of PS methods based on the results of the preceding subsections, we limit the scope of the present paper to illustrating only a small set of these methods.  A more extensive discussion of other options and numerical results are discussed in \cite{koeppen}.  Nonetheless, we note that \emph{\textbf{a selection of the proper grid for optimal control problems is largely based on whether or not the horizon is finite or infinite\cite{ross-book}}}.

\section{A Numerical Demonstration of the Well-Conditioning of Birkhoff PS Discretizations}
A quick examination of \eqref{eq:rightProbPNa} shows that the linear equation given by \eqref{eq:Xa-eval} is the only equation in a Birkhoff PS method (for Case (a)) that is independent of the problem data functions $E, e$ and $f$. Consequently, if the original continuous-time problem is well-conditioned, the conditioning of the discretized problem is driven by the condition numbers of the linear equation given by \eqref{eq:Xa-eval} for Case (a), and by \eqref{eq:Xb-eval} for Case (b). Because all conclusions drawn with respect to the condition number of Case (a) are identical to that of Case (b), we focus the entirety  of our discussions in this section to that of Case (a) only.

A very powerful statement on the condition number of the Birkhoff PS discretization can be drawn for the special case of a known value of the initial condition, $x_0$. In this case, we can re-write \eqref{eq:Xa-eval} as,
\begin{equation}\label{eq:Xa-rewrite-1}
\bB_a V_a - X_a = -x_0\, \bb_0
\end{equation}
where, the right-hand-side of \eqref{eq:Xa-rewrite-1} is a known quantity.  Hence, the condition number of a Birkhoff PS discretization is driven by the condition number of the ($N\times 2N$)-matrix $\bC^{\text{Birk}}_N$ defined by,
\begin{equation}\label{eq:CN-Birk-def}
\bC^{\text{Birk}}_N := \big[\bB_a, -\bI_N\big]
\end{equation}
where $\bI_N$ is the $N\times N$ identity matrix. As evident from Fig.~\ref{fig:cond-CN-Birk},
%
   \begin{figure}[h!]
      \centering
     \includegraphics[angle=0, width = 0.6\textwidth]{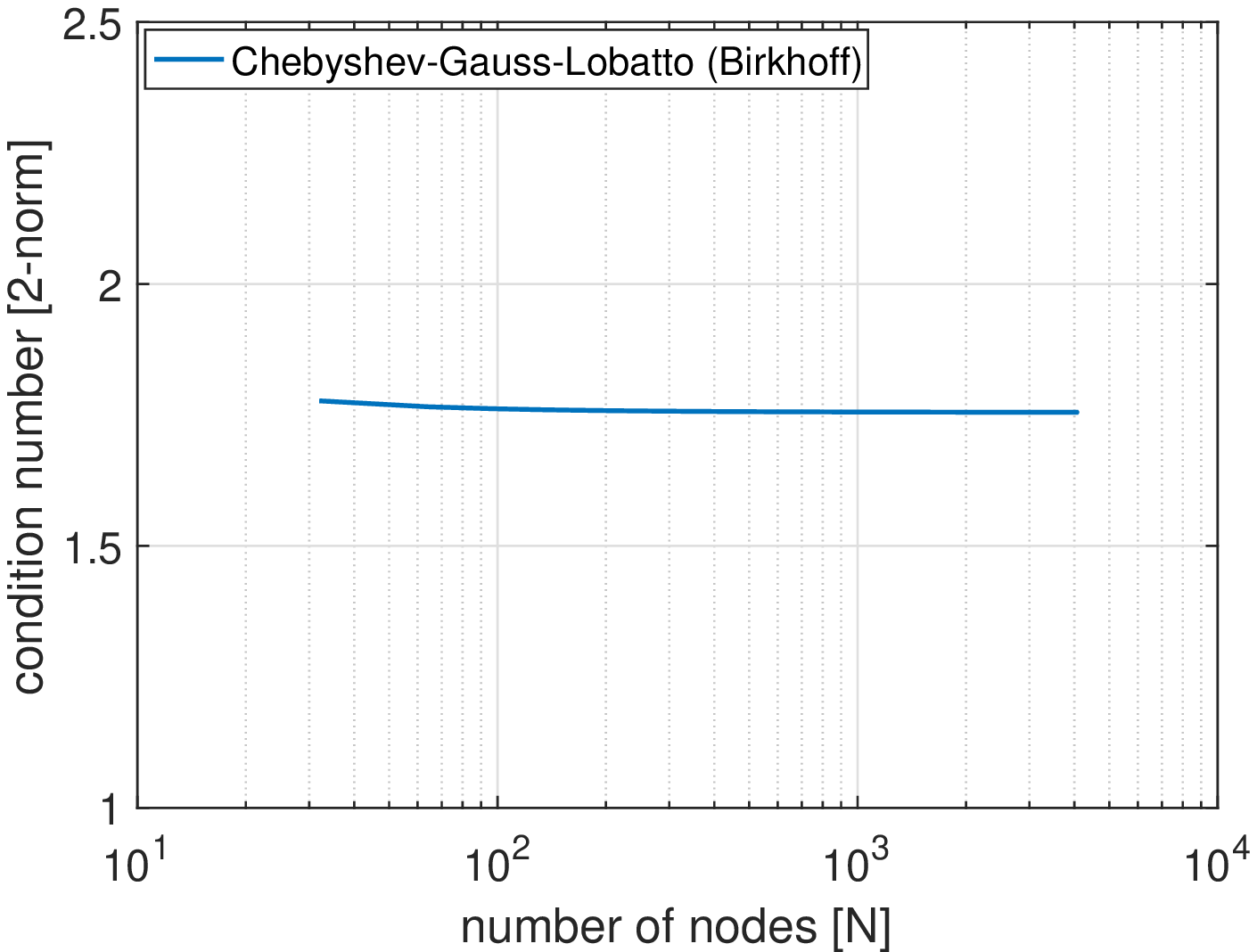}
    \caption{\textsf{Demonstrating $\mathcal{O}(1)$ variation in the condition number of the matrix $\bC^{\text{Birk}}_N$ defined in \eqref{eq:CN-Birk-def} with evaluations over $N$ CGL nodes. }}
    \label{fig:cond-CN-Birk}
   \end{figure}
%
the condition number of $\bC^{\text{Birk}}_N$ is remarkably flat with respect to $N$; i.e., of $\mathcal{O}(1)$.  The condition number of a corresponding Lagrange PS discretization may be obtained by multiplying both sides of \eqref{eq:Xa-rewrite-1} by $\bD_a$ and invoking Theorem 1.  This results in
\begin{equation}\label{eq:Xa-rewrite-Lagrange}
\bD_a X_a - V_a = x_0\, \bD_a \bb_0
\end{equation}
with the right-hand-side of \eqref{eq:Xa-rewrite-Lagrange} considered to be a known quantity. Hence, the condition number of a Lagrange PS discretization is driven by the condition number of the ($N\times 2N$)-matrix,
\begin{equation}\label{eq:CN-Lagr-def}
\bC^{\text{Lagr}}_N := \big[\bD_a, -\bI_N\big]
\end{equation}
It is clear from Fig.~\ref{fig:cond_CN}
%
   \begin{figure}[h!]
      \centering
      {\includegraphics[angle=0, width = 0.6\textwidth]{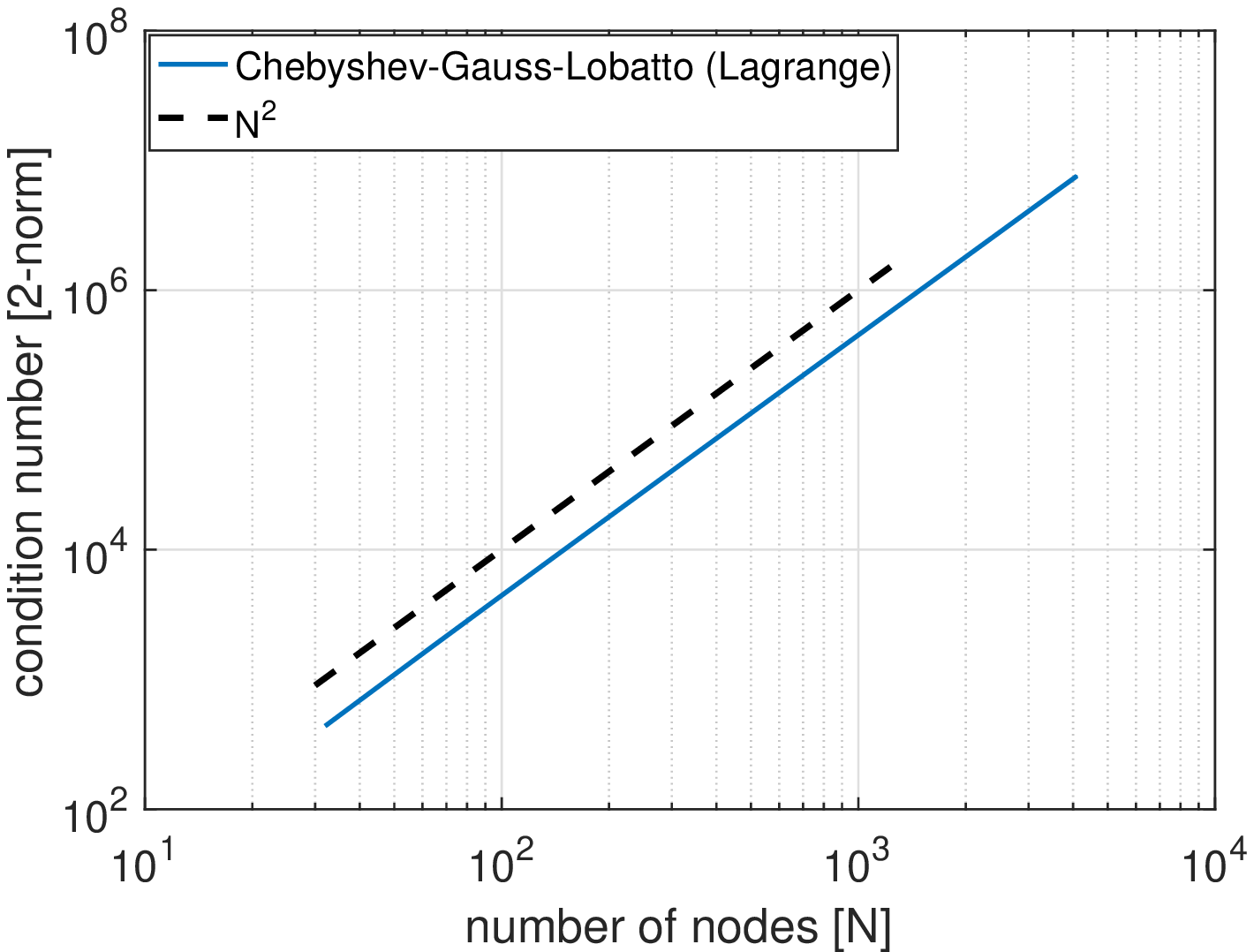}}
      \caption{\textsf{$\mathcal{O}(N^2)$-growth in the condition number of the matrix $C_N^{\text{Lagr}}$ defined in \eqref{eq:CN-Lagr-def} with evaluations over $N$ CGL nodes.}}
    \label{fig:cond_CN}
   \end{figure}
%
that the condition number of $\bC^{\text{Lagr}}_N$ is $\mathcal{O}(N^2)$ and consistent with the plot shown in Fig.~\ref{fig:CondNo}.

In a generic optimal control problem, $x_0$ is an optimization variable.  In this general case, the right-hand-side of \eqref{eq:Xa-rewrite-1} is not known; hence, we need to examine the condition number of the $N\times (2N+1)$-matrix,
\begin{equation}\label{eq:AN-Birk-def}
\bA^{\text{Birk}}_N := \big[\bB_a, -\bI_N, \bb_0\big]
\end{equation}
It is apparent from Fig.~\ref{fig:cond_AN} that the condition number of $\bA_N^{\text{Birk}}$ varies only as $\mathcal{O}\left(\sqrt{N}\right)$.
%
   \begin{figure}[h!]
      \centering
      {\includegraphics[angle=0, width = 0.6\textwidth]{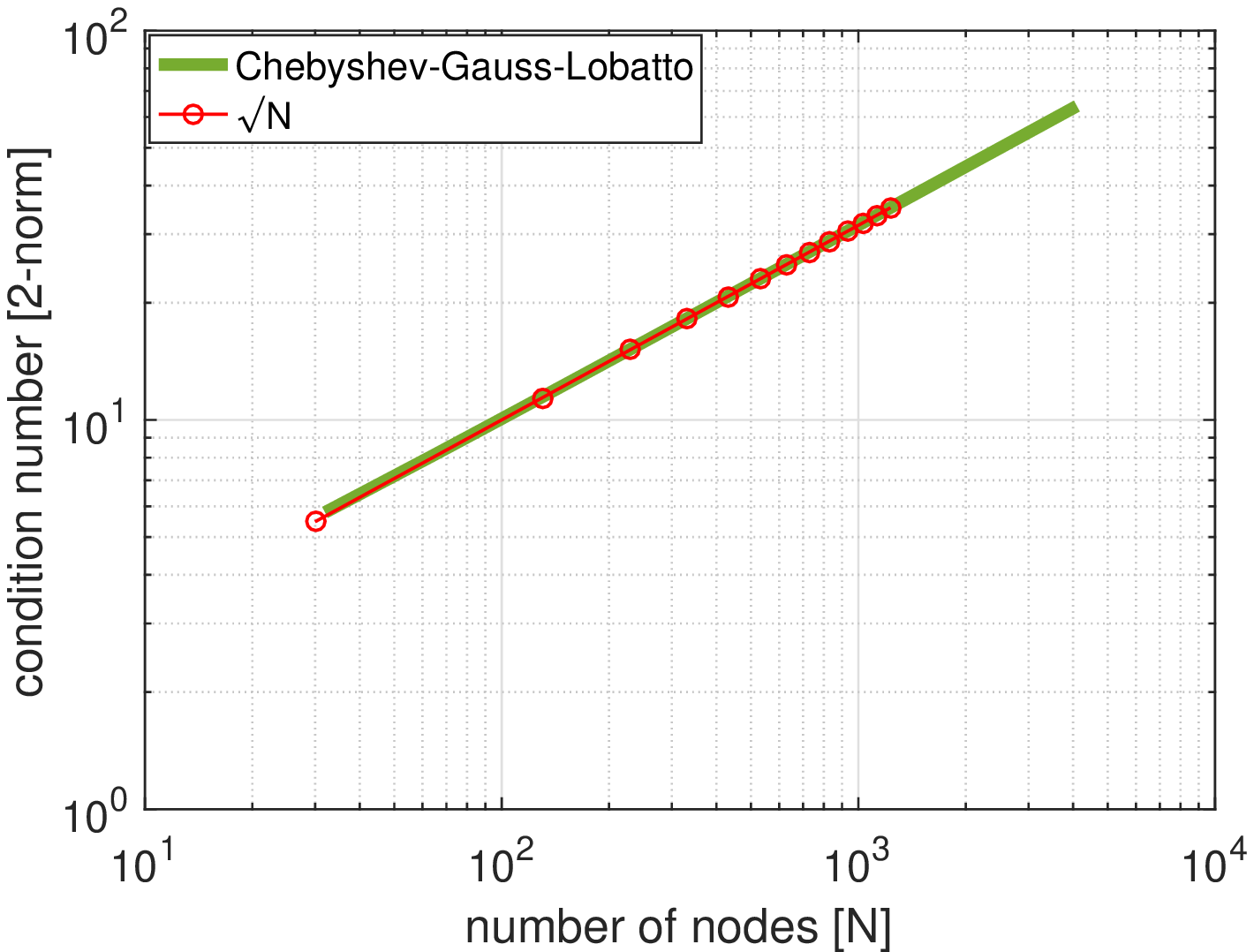}}
      \caption{\textsf{Demonstrating $\mathcal{O}\left(\sqrt{N}\right)$-growth in the condition number of the matrix $\bA^{\text{Birk}}_N$ defined in \eqref{eq:AN-Birk-def} with evaluations over $N$ CGL nodes.}}
    \label{fig:cond_AN}
   \end{figure}
%

The condition numbers for other choices of grid selections (see Remark \ref{remark:Birkhoff-galore}) have similar behavior; see, for example, Fig.~\ref{fig:CondNo}. In this paper we largely focus on Lobatto-based grid selections because they are the correct choice for solving generic finite-horizon optimal control problems (see for example, Ref.~\cite{PSReview-ARC-2012}, Section 4.4.4 in Ross\cite{ross-book}, and Ref.~\cite{advances}). Radau-based mesh points  --- first introduced in \cite{Radau-GNC05} and \cite{Radau-JGCD} for optimal control applications --- are more appropriate for infinite-horizon optimal control problems because the non-unit weight function associated with the Legendre-Radau-grid vanishes at the $+1$ (or $-1$) point\cite{PSReview-ARC-2012,Radau-JGCD}.  Consequently, a Legendre-Radau grid is mathematically justified only for some limited choices of boundary conditions in a finite-horizon optimal control problem\cite{PSReview-ARC-2012,advances}.  \emph{\textbf{Using a Radau mesh for finite-horizon optimal control applications with arbitrary boundary conditions produces well-known convergence issues and control-chatter problems\cite{PSReview-ARC-2012,ross-book,advances}}}.  Pure Gauss points are even more limiting with regards to their suitability for solving generic optimal control problems\cite{PSReview-ARC-2012,advances}.

\section{An Illustrative Example}
There are many emerging problems in aerospace trajectory optimization that require a fine mesh\cite{ross-challenge,nolcos16,napa16,conway:book-chapter}. A detailed discussion of these problems is beyond the scope of this paper; hence, we use a proxy low-thrust orbital maneuvering problem\cite{bellman-low-t, conway:book-chapter} as a prototypical challenge to illustrate the numerics of the new PS methods developed in the preceding sections.  That is, we choose this problem for illustrative purposes only and not necessarily to address the myriad of issues that are associated with the specifics of low-thrust orbital maneuvering.  To this end, consider a minimum-time circle-to-circle continuous thrust orbit transfer problem (see Fig.~\ref{fig:OXfer}) given by,
%
\begin{eqnarray*}
& \bx = (r, \theta, v_r, v_t) \in \real 4, \quad \bu = \alpha \in \Real & \nonumber \\ \\
(O_\text{Xfer}) &\left\{
\begin{array}{lrl}
\textsf{Minimize } & J[\bx(\cdot), \bu(\cdot), t_f] =& t_f
\\ \\
\textsf{Subject to}
& \dot r =& v_r \\ \\
& \dot\theta =& \displaystyle\frac{v_t}{r} \\ \\
&\dot v_r =& \displaystyle\frac{v_t^2}{r}-\frac{\mu}{r^2}+A\sin\alpha  \\ \\
&\dot v_t =& -\displaystyle\frac{v_r v_t}{r}+ A\cos\alpha \\
&t_0 = & 0 \\
&\left(r_0, \theta_0, v_{r_0}, v_{t_0}\right) =& \left(r^0, 0, 0, \sqrt{\mu/r^0}\right)\\
&\left(r_f, v_{r_f}, v_{t_f}\right) =& \left(r^f, 0, \sqrt{\mu/r^f}\right)
\end{array} \right. &
\end{eqnarray*}
%
%
%
   \begin{figure}[h!]
      \centering
      {\includegraphics[angle=0, height=3in, width = 0.6\textwidth]{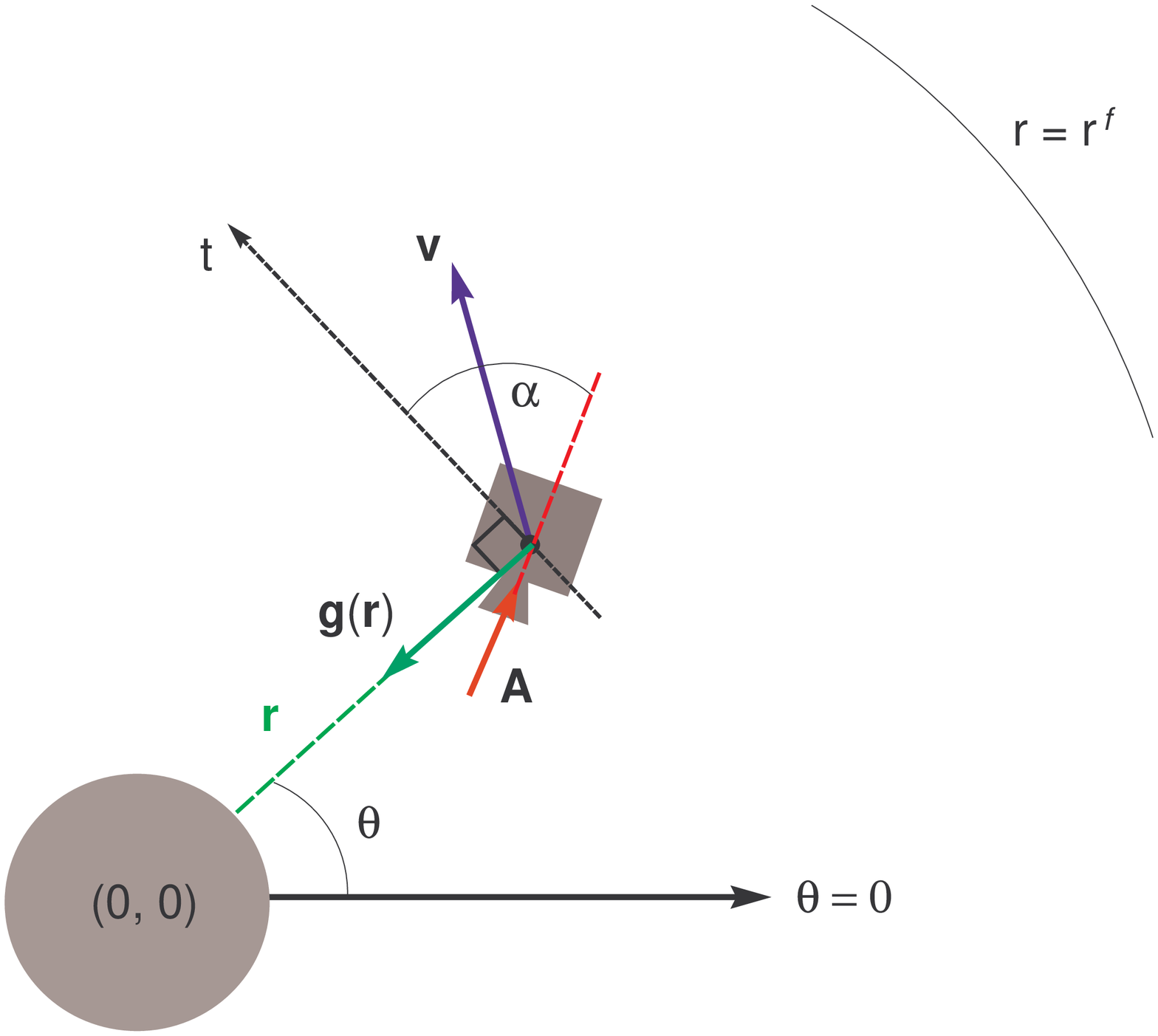}}
      \caption{\textsf{Schematic for Problem $O_\textrm{Xfer}$.}}
    \label{fig:OXfer}
   \end{figure}
%
where, $r$ is the radial position of the spacecraft, $\theta$ is the true anomaly, $v_r$ and $v_t$ are the radial and transverse components of the spacecraft velocity, $\alpha$ is the thrust steering angle, $\mu$ is the gravitational parameter and $A$ is the magnitude of the constant thrust acceleration. What makes Problem $O_\text{Xfer}$ challenging is that if $A$ is low and $\left(r^f/r^0\right)$ is high, then the number of orbital revolutions to attain the final orbit is high; hence, from an elementary application of the Nyquist-Shannon sampling theorem, it is apparent that the number of nodes must be high.  \emph{\textbf{In principle, solving a problem with a large number of nodes is not a significant technological hurdle\cite{ross-challenge,ross-book}.  As an elementary illustration of the technology requirements, we note that a million variables only requires 8~MB of computer memory.}}  What makes a high-node problem hard to solve is science and not technology; that is, the problem is the high condition number of the differentiation matrix associated with a PS method. For example, from Fig.~\ref{fig:CondNo}, the condition number of a PS differentiation matrix for $N= 1000$ is about $10^6$; i.e., $\mathcal{O}(N^2)$.  Because it is equivalent to a right-preconditioned Lagrange PS method (see \eqref{eq:rightProbPNa}), a Birkhoff PS method mitigates the problem of high condition numbers.  For example, for $N=1000$, we expect the condition number to be about $30$; see Fig.~\ref{fig:cond_AN}.  To illustrate these features, we solve Problem $O_\text{Xfer}$ using the following canonical units:
\begin{equation}
\begin{aligned}
\text{Distance Unit} &= r^0\\
\text{Speed Unit} & = \sqrt{\mu/r^0}\\
\text{Time Unit} & = \sqrt{\left(r^0\right)^3/\mu}\\
\text{Acceleration Unit} & = \mu/\left(r^0\right)^2
\end{aligned}
\end{equation}
The boundary conditions for a representative LEO-to-GEO orbit transfer problem (in canonical units) is given by,
\begin{equation}\label{eq:leo2geoBC}
\begin{aligned}
\left(r_0, \theta_0, v_{r_0}, v_{t_0}\right) =& \left(1, 0, 0, 1\right)\\
\left(r_f, v_{r_f}, v_{t_f}\right) =& \left(6, 0, \sqrt{1/6}\right)
\end{aligned}
\end{equation}
A low-thrust solution ($A = 5 \times 10^{-4}$) resulting from an application of Birkhoff PS method is shown in Fig.~\ref{fig:geo1024_polar}.
%
   \begin{figure}[h!]
      \centering
      {\includegraphics[angle=0, width=0.6\textwidth]{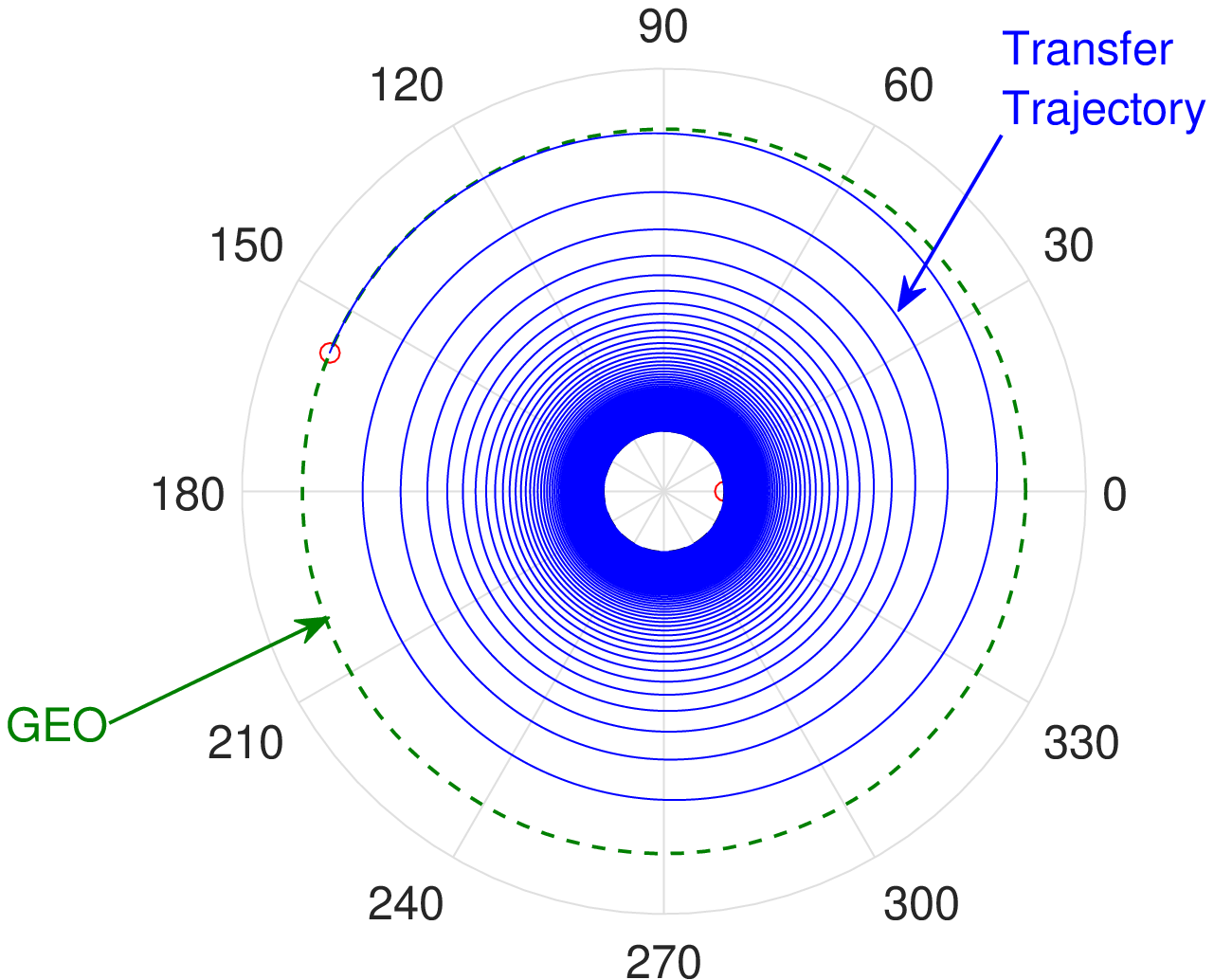}}
      \caption{\textsf{A Birkhoff PS solution for Problem $O_\text{Xfer}$ and boundary conditions given by \eqref{eq:leo2geoBC}.}}
    \label{fig:geo1024_polar}
   \end{figure}
%
This solution was obtained using a CGL mesh and the right preconditioning method defined by \eqref{eq:rightProbPNa}.  Results for other cases and problems including solutions from the vast variety of other possible Birkhoff PS methods outlined in Remark \ref{remark:Birkhoff-galore} are discussed in \cite{koeppen}.  In order to limit the scope of this section, we only discuss this particular case.

The time to accomplish the maneuver shown in Fig.~\ref{fig:geo1024_polar} was approximately 1187 time units; this translates to approximately 13 days of continuous low-thrust maneuvering.  The number of CGL points was $N = 1024$.  In other words, $r(t)$ shown in Fig.~\ref{fig:geo1024_polar} is given by a polynomial of over a thousand degree!  The ``discrete'' steering angle at 1024 CGL points is shown in Fig.~\ref{fig:geo1024_alpha}.
%
   \begin{figure}[h!]
      \centering
      {\includegraphics[angle=0, width = 0.6\textwidth]{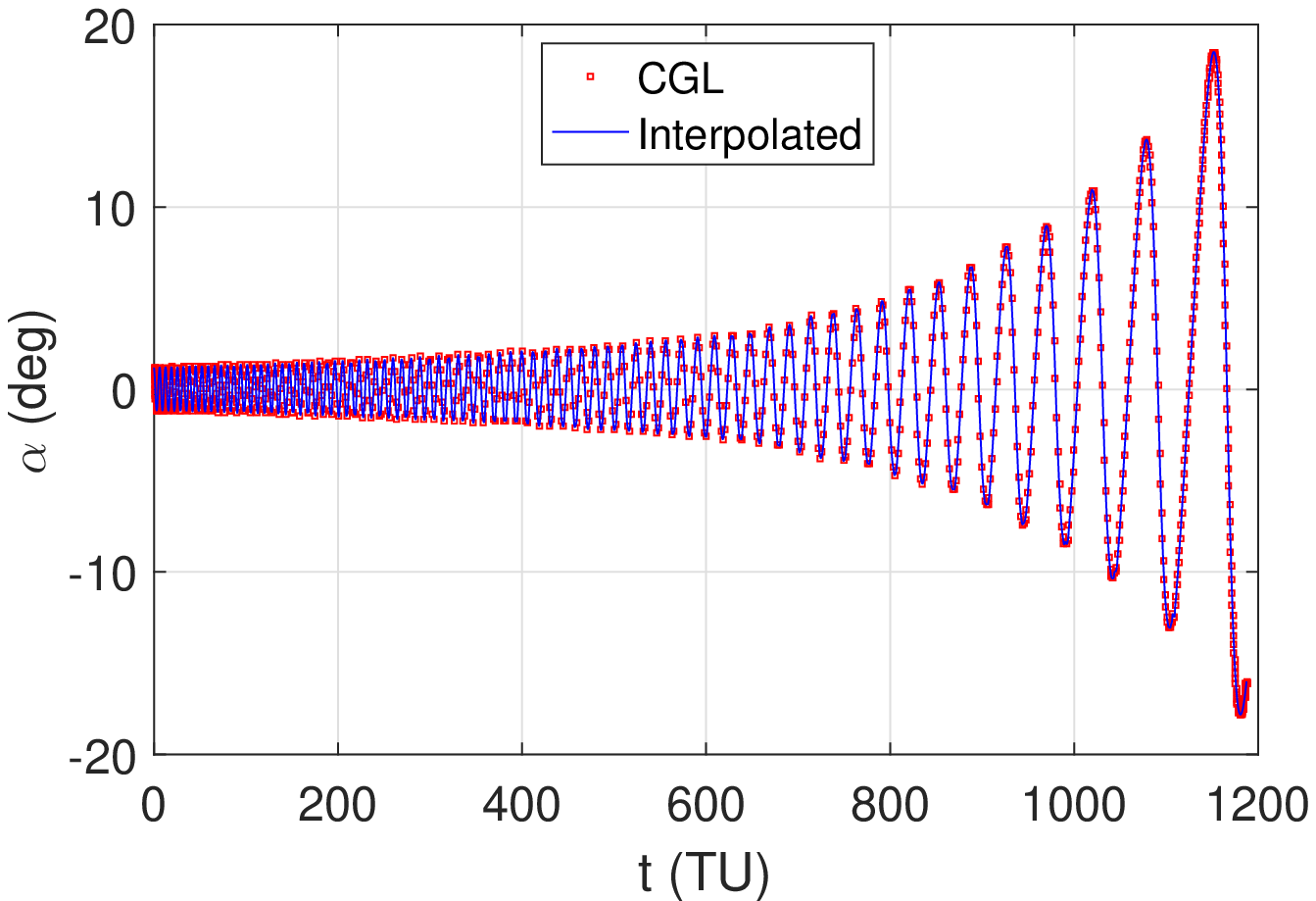}}
      \caption{\textsf{A CGL-interpolated PS control solution $t \mapsto \alpha(t)$ corresponding to the plot shown in Fig.~\ref{fig:geo1024_polar}.}}
    \label{fig:geo1024_alpha}
   \end{figure}
%
When this steering angle is interpolated through the CGL mesh, we get the continuous function $t \mapsto \alpha$ indicated by the solid line in Fig.~\ref{fig:geo1024_alpha}. Using the interpolated function $t \mapsto \alpha$, the dynamical equations can be easily integrated using ode45 (from MATLAB) to generate a propagated trajectory.  This propagated trajectory is included in Fig.~\ref{fig:geo1024_polar}; however, this plot is not noticeable  in this figure because the differences are too small to be visually apparent.
%
   \begin{figure}[h!]
      \centering
      {\includegraphics[angle=0, width = 0.6\textwidth]{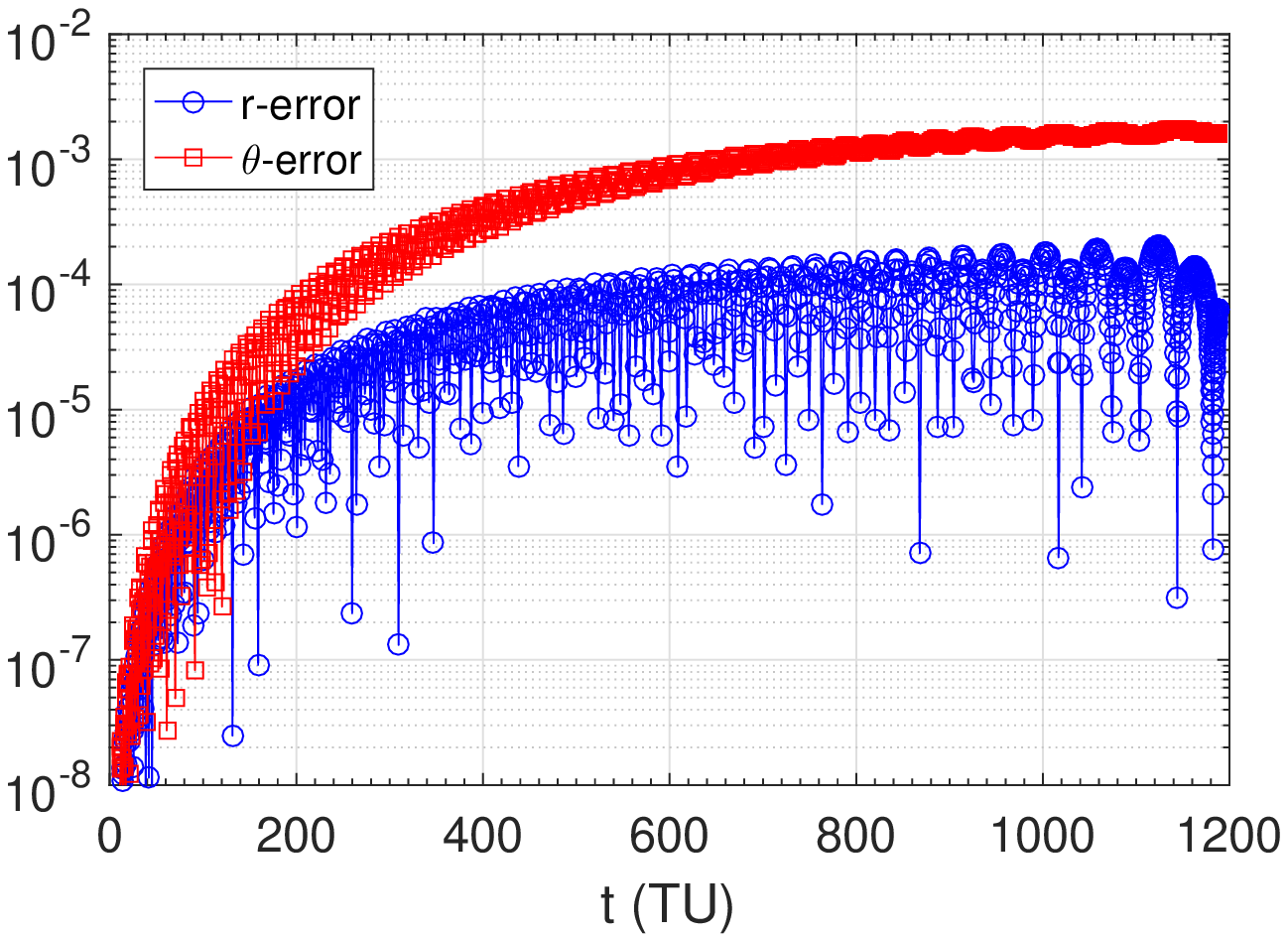}}
      \caption{\textsf{Error plots as measured by the difference between a Birkhoff PS solution and a Runge-Kutta propagation through the CGL-interpolated solution shown in Fig.~\ref{fig:geo1024_alpha}.}}
    \label{fig:geo1024_error}
   \end{figure}
%
Consequently, we provide an error plot in Fig.~\ref{fig:geo1024_error} as a matter of completeness.

\section{Conclusions}
State trajectories in aerospace dynamical systems are absolutely continuous; hence, by an elementary application of the Stone-Weierstrass theorem, they can be expressed to any arbitrary precision in terms of some finite-order polynomial. In using the big two polynomials, namely Legendre and Chebyshev, pseudospectral (PS) optimal control theory has found numerous applications that include include flight implementations in aerospace engineering. In many practical PS optimal control applications, very accurate solutions can be generated using polynomials of order $N \le 100$, thanks to the spectral (i.e., near exponential) rate of convergence.  Emerging applications have sought to generate solutions in excess of $N > 1000$.  The challenge with such high orders is the high condition number \big($\sim\mathcal{O}\left(N^2\right) $ \big) of the associated differentiation matrix.  This is largely a science problem rather than a technological one.  That is, a computer's processing capability is not a limiting factor because \emph{\textbf{8MB of memory for a million variables is no longer a technological hurdle}}. New advancements in the science associated with Birkhoff interpolation provide a dramatic drop in the condition number: from $\mathcal{O}\left(N^2\right) $ to $\mathcal{O}\big(\sqrt{N}\big) $ or even $\mathcal{O}\big(1\big)$ in some cases.  In advancing these new ideas, the problem with condition numbers is now relegated to specific applications and not to PS optimal control techniques. Nonetheless, a significant amount of new research on Birkhoff PS methods remains to be done.

\section*{Acknowledgment}
We take this opportunity to thank the anonymous reviewers for providing many detailed comments.  Their comments greatly improved the quality of this paper.




\end{document}